\newtheorem{prop}{Proposition}[section]
\newtheorem{lema}[prop]{Lemma}
\newtheorem{teo}[prop]{Theorem}
\newtheorem{corolario}[prop]{Corollary}
\newcommand{\supp}{\mbox{supp}}
\newcommand{\uno}{1\!\!1}
\title[Valuations on star bodies]{Continuity and representation of valuations on star bodies}
\author{Pedro Tradacete}
\address{Mathematics Department\\ Universidad Carlos III de Madrid \\  28911 Legan\'es (Madrid). Spain.}
\email{ptradace@math.uc3m.es }
\thanks{P. Tradacete acknowledges support from Spanish MINECO under grants MTM2016-76808-P and MTM2016-75196-P.}
\author{Ignacio Villanueva}
\address{Departamento de An\'alisis Matem\'atico \\
Facultad de Matem\'aticas \\ Universidad Complutense de Madrid \\
Madrid 28040}
\email{ignaciov@mat.ucm.es}
\thanks{Research of I. Villanueva was partially supported by grants MTM2014-54240-P, funded by MINECO and QUITEMAD+-CM, Reference: S2013/ICE-2801, funded by Comunidad de Madrid}
\begin{document}

\begin{abstract}
It is shown that every  continuous valuation defined on the $n$-dimensional star bodies has an integral representation in terms of the radial function. Our argument is based on the non-trivial fact that continuous valuations are uniformly continuous on bounded sets. We also characterize the  continuous valuations on the $n$-dimensional star bodies that arise as the restriction of a measure on $\mathbb R^n$.
\end{abstract}

\subjclass[2010]{52B45, 52A30}

\keywords{Star sets; Star bodies; Valuations}

\maketitle

\section{Introduction}

A valuation is a function $V$, defined on a given class of sets $\mathcal F$, which satisfies that for every $A,B\in\mathcal F$
$$
V(A\cup B)+V(A\cap B)=V(A)+V(B).
$$
Valuations can be thought of as a certain generalization of the notion of measure, and have become a relevant area of study in convex geometry. For instance, volume, surface area, and Euler characteristic are distinguished examples of valuations (in the appropriate classes of sets). Historically, valuations were an essential tool in M. Dehn's solution to Hilbert's third problem, asking whether an elementary definition for volume of polytopes was possible. 

The celebrated theorem of H. Hadwiger characterizes continuous rotation and translation invariant valuations on convex bodies as linear combinations of the quermassintegrals \cite{Had}. More recently, S. Alesker provided the characterization of those valuations which are only rotation invariant \cite{Alesker}, as well as those which are only translation invariant \cite{Alesker:01}. We refer to \cite{Alesker,Alesker:01,Lu1,Lu2,LuR} for a broad vision on the role of valuations in convex geometry. Recent developments in valuation theory and its connections with other areas of mathematics can also be found in \cite{Alesker:14}.

Valuations on convex bodies belong to the classical Brunn-Minkowski theory. In \cite{Lut1}, E. Lutwak introduced and developed a dual version of Brunn-Minkowski theory: in this context, convex bodies, Minkowski addition and Hausdorff metric are replaced by star bodies, radial addition and  radial metric, respectively. These have played an important role in the solution of the well-known Busemann-Petty problem \cite{Ga1,Ga2,Zh}, and have become a fundamental area of research \cite{HLYZ,Lut2,Lut3}. D. A. Klain initiated in \cite{Klain96}, \cite{Klain97} the study of rotationally invariant valuations on a specific  class of star-shaped sets, namely those whose radial function is $n$-th power integrable.

In this work we characterize radial continuous valuations on $\mathcal S_0^n$,  the star bodies of $\mathbb R^n$ (i.e. star sets with continuous radial function), in terms of an integral representation. 

Our main results is 

\begin{teo}\label{t:integral}
$V:\mathcal S_0^n\longrightarrow\mathbb R$ is a radial continuous valuation if and only if there exist a finite Borel measure $\mu$ on $S^{n-1}$ and a function $K:\mathbb R_+\times S^{n-1}\rightarrow\mathbb R$ such that
\begin{enumerate}
\item[(a)] $K$ satisfyies the strong Carath\'eodory condition (i.e., for each $s\in\mathbb R_+$ the function $K(s,\cdot)$ is Borel measurable, and for $\mu$-almost every $t\in S^{n-1}$ the function $K(\cdot,t)$ is continuous),
\item[(b)] for every $\lambda\in \mathbb R_+$ there is $G_\lambda\in L^1(\mu)$ such that $K(s,t)\leq G_\lambda(t)$ for $s<\lambda$ and $\mu$-almost every $t\in S^{n-1}$,
\end{enumerate}
and for every star set $L$ with bounded Borel radial function $\rho_L$
\begin{equation}\label{eq:integral}
V(L)=\int_{S^{n-1}} K(\rho_L(t),t)d\mu(t).
\end{equation}
\end{teo}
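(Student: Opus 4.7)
The sufficiency direction is immediate. If $V$ is defined by \eqref{eq:integral}, then since $\{\rho_L(t),\rho_M(t)\}=\{\rho_{L\cup M}(t),\rho_{L\cap M}(t)\}$ as unordered pairs, pointwise
\[
K(\rho_L(t),t)+K(\rho_M(t),t)=K(\rho_{L\cup M}(t),t)+K(\rho_{L\cap M}(t),t),
\]
and integration against $\mu$ yields the valuation identity. Radial continuity of $V$ follows from dominated convergence via (a) and~(b).

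For the converse, my plan is to associate with $V$ a family $\{\nu_r\}_{r\geq 0}$ of finite signed Borel measures on $S^{n-1}$ whose Radon--Nikodym derivatives with respect to a common dominating measure $\mu$ will recover the kernel~$K$. The first step is to extend $V$ to star sets of the form $L_{r\chi_A}$ (and more generally to bounded Borel radial functions); here the paper's earlier result that continuous valuations are uniformly continuous on bounded subsets of $\mathcal S_0^n$ enters decisively, enabling a Daniell-type extension $\tilde V$ through monotone approximation by continuous radial functions. Then set
\[
\nu_r(A):=\tilde V(L_{r\chi_A})-V(\{0\}),\qquad A\subseteq S^{n-1}\text{ Borel}.
\]
Finite additivity follows by applying the valuation identity to $r\chi_A$ and $r\chi_B$ for disjoint $A,B$, whose pointwise maximum and minimum are $r\chi_{A\cup B}$ and $0$; countable additivity follows from the continuity of $\tilde V$ under bounded monotone convergence of radial functions.

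Next I would fix a countable dense set $D\subset\mathbb R_+$ and take a positive finite Borel measure $\mu$ on $S^{n-1}$ dominating every $\nu_r$ with $r\in D$ (for instance, a suitably weighted combination of the $|\nu_r|$ together with a reference measure of full support). The Radon--Nikodym theorem then defines $K(r,\cdot):=d\nu_r/d\mu+V(\{0\})/\mu(S^{n-1})$ for each $r\in D$. Uniform continuity of $V$ on bounded sets translates into $\sup_A|\nu_r(A)-\nu_{r'}(A)|\to 0$ as $|r-r'|\to 0$, uniformly on bounded intervals; with a coherent choice of representatives of the densities on $D$ this lets me define $K(r,t):=\lim_{D\ni r_n\to r}K(r_n,t)$ on a $\mu$-conull set, producing a version for which $K(\cdot,t)$ is continuous for $\mu$-a.e.\ $t$, establishing~(a). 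For~(b), the uniform total-variation bound $|\nu_r|(S^{n-1})\leq M_\lambda$ for $r\leq\lambda$ together with an equi-integrability argument furnishes $G_\lambda(t):=\sup_{r\in D\cap[0,\lambda]}K(r,t)\in L^1(\mu)$.

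The integral formula \eqref{eq:integral} will then be verified first for simple Borel radial functions $\rho=\sum_i r_i\chi_{A_i}$ by induction on the number of pieces, using the valuation identity together with the definition of $\nu_{r_i}$; it extends to continuous $\rho_L$ by uniform approximation combined with radial continuity of $V$, and finally to general bounded Borel $\rho_L$ via bounded convergence on the integral side and the definition of $\tilde V$ on the other. The main obstacle I anticipate is establishing the $\mu$-a.e.\ continuity of $K(\cdot,t)$ required in~(a): promoting the $L^1$-convergence in the parameter $r$ to genuine pointwise continuity demands a careful coherent selection of Radon--Nikodym representatives across the dense set $D$. A second delicate point is producing the pointwise integrable upper bound $G_\lambda$ in~(b), which is stronger than the uniform $L^1$-boundedness of the individual slices.
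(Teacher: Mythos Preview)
Your outline follows the same architecture as the paper's proof: extend $V$ to bounded Borel radial functions, define $\nu_r(A)=\tilde V(r\chi_A)$, take Radon--Nikodym derivatives with respect to a dominating measure, upgrade the $L^1$-continuity in $r$ to $\mu$-a.e.\ continuity of $K(\cdot,t)$, and verify the formula first on simple functions. The sufficiency direction is handled identically.

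However, the two points you flag as ``delicate'' are precisely where the paper makes specific choices that your sketch does not, and your proposed treatment of (b) has a genuine gap. Your candidate $G_\lambda(t)=\sup_{r\in D\cap[0,\lambda]}K(r,t)$ need not lie in $L^1(\mu)$: uniform $L^1$-boundedness, even together with equi-integrability, does \emph{not} force the pointwise supremum to be integrable. (For instance, on $[0,1]$ with Lebesgue measure, $f_n=(n/\log n)\chi_{[0,1/n]}$ is uniformly integrable with $\|f_n\|_1\to 0$, yet $\sup_n f_n(x)\sim 1/(x\log(1/x))\notin L^1$.) The paper sidesteps this entirely by choosing $\mu$ built from the control measures
\[
\mu_\lambda(A)=\inf_{G\supset A\text{ open}}\ \sup\{\tilde V(f):f\prec G,\ \|f\|_\infty\le\lambda\},
\]
which by construction satisfy $\nu_s(A)\le\mu_\lambda(A)$ for every $s\le\lambda$; then $G_\lambda:=d\mu_\lambda/d\mu$ gives the pointwise bound in (b) for free. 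Your measure $\mu$, a weighted sum of the $|\nu_r|$, carries no such built-in domination.

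For (a), your statement that $\sup_A|\nu_r(A)-\nu_{r'}(A)|\to 0$ is correct but only gives $\|K(r,\cdot)-K(r',\cdot)\|_{L^1}\to 0$; promoting this to a.e.\ uniform continuity of $r\mapsto K(r,t)$ on bounded rationals requires more. The paper (Lemma~\ref{l:unifrational} and Lemma~\ref{l:omega} in the appendix) applies the uniform continuity of $\tilde V$ not to $r\chi_A$ but to simple functions $\sum_i s_i\chi_{A_i}$ with \emph{different} levels on the pieces of a partition, obtaining a modulus $\omega_\lambda(\delta)$ that controls $\mu\bigl(\{t:\sup_{|s-s'|\le\delta}|K_0(s,t)-K_0(s',t)|>\varepsilon\}\bigr)$ simultaneously over all rational pairs; a Borel--Cantelli argument then produces the conull set on which $K_0(\cdot,t)$ is uniformly continuous. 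Your ``coherent selection of representatives'' would have to reproduce this.
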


The proof relies heavily on the fact that radial continuous valuations on $\mathcal S_b^n$, the star sets of $\mathbb R^n$,  are uniformly continuous on bounded sets. See Theorem \ref{t:unif} for the precise statement.

This culminates a series of previous works: In \cite{Vi}, the second named author started the study of continuous valuations on star bodies, characterizing positive rotation invariant valuations as those described by certain integral representation. As a continuation of that work, in \cite{TraVi}, the positivity condition was dropped extending the integral representation to general rotation invariant valuations. 

In addition, the general case (that is, non rotationally invariant valuations) was also studied.  In this direction, it was shown in \cite[Theorem 1.1]{TraVi} that every radial continuous valuation on $\mathcal S_0^n$, the $n$-dimensional star bodies, extends uniquely to a valuation on $\mathcal S_b^n$, the bounded Borel star sets of $\mathbb R^n$. Moreover, using this extension, it can be seen that such a valuation admits an integral representation which is at least valid for star sets with simple radial function. More precisely, \cite[Theorem 1.2]{TraVi} already showed  that for a radial continuous valuation $V:\mathcal S_b^n\to\mathbb R$ there exist a Borel measure $\mu$  on $S^{n-1}$ and  a function $K:\mathbb R_+\times S^{n-1}\rightarrow \mathbb R$ such that, for every star body $L$ whose radial function $\rho_L$ is a simple function,  
one has
$$
V(L)=\int_{S^{n-1}} K(\rho_L(t),t)d\mu(t).
$$

\

Having completed the integral characterization of valuations on star bodies, we will apply it in order to classify the valuations arising from measures in $\mathbb R^n$. Note that, if we think of valuations as possible invariants leading to characterizations of properties of star bodies on $\mathbb R^n$, first of all we should be able to distinguish those valuations that are ``just'' measures on $\mathbb R^n$ from those which are ``strict'' valuations in the sense that they are not induced by any measure in $\mathbb R^n$.

The main tool to do this is the notion of {\em variation} of a valuation, which will be introduced in Section \ref{S:measures}. This will allow us to show that a valuation $V$ arises from a measure in $\mathbb R^n$ if and only if
$V=V_1-V_2$, where $V_1, V_2$ are monotonic increasing valuations. The precise result is

\begin{teo}\label{t:variacion}
Let $V:\mathcal S_0^n\longrightarrow \mathbb R$ be a radial continuous valuation.  Then the following are equivalent:

\begin{enumerate}
\item There exists a (signed) countably additive measure $\nu$ defined on the Borel sets of $\mathbb R^n$ such that, for every $L\in \mathcal S_0^n$, $$\nu(L)=V(L).$$

\item $V$ has bounded variation.

\item $V$ is the difference of two monotonic increasing continuous valuations.

\item There exist $K$ and $\mu$ as in Theorem \ref{t:integral} such that, for $\mu$-almost every $t\in S^{n-1}$, $K(\cdot, t)$ is a continuous function of bounded variation.
\end{enumerate}
\end{teo}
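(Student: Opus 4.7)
The plan is to go around the cycle (1) $\Rightarrow$ (2) $\Rightarrow$ (3) $\Rightarrow$ (4) $\Rightarrow$ (1), using Theorem \ref{t:integral} to shuttle between valuations and their representing kernels whenever this simplifies the argument. The passage between (3) and (4) is most naturally performed at the kernel level, and the direct implications (1) $\Rightarrow$ (2) and (4) $\Rightarrow$ (1) admit short proofs; the substantive content lies in (2) $\Rightarrow$ (3), a Jordan-type decomposition for valuations of bounded variation.

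The easy implications I would dispatch as follows. For (1) $\Rightarrow$ (2), along any finite chain $\{0\}\subset L_1\subset\cdots\subset L_m$ of star bodies one has $\sum_i|V(L_i)-V(L_{i-1})|=\sum_i|\nu(L_i\setminus L_{i-1})|\leq |\nu|(\mathbb R^n)$, bounding the variation of $V$ uniformly. For (3) $\Rightarrow$ (4), applying Theorem \ref{t:integral} to each $V_i$ produces a kernel $K_i$ that must be increasing in the first variable for $\mu$-almost every $t$ (as seen by evaluating $V_i$ on nested star bodies whose radial functions differ only on small spherical caps), so that $K=K_1-K_2$ is continuous of bounded variation in $s$. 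For (4) $\Rightarrow$ (1), I would perform the fibrewise Jordan decomposition $K(\cdot,t)=K^+(\cdot,t)-K^-(\cdot,t)$ with $K^\pm(s,t)=\tfrac12(P(s,t)\pm K(s,t))$, where $P(s,t)$ denotes the total variation of $K(\cdot,t)$ on $[0,s]$; each $K^\pm$ is continuous and increasing in $s$, and tensoring the Lebesgue--Stieltjes measures $dK^\pm(\cdot,t)$ against $\mu$ in polar coordinates produces positive finite Borel measures $\nu^\pm$ on $\mathbb R^n$. A direct computation then gives $\nu^+(L)-\nu^-(L)=V(L)-V(\{0\})$ for every $L\in\mathcal S_0^n$, so $\nu=\nu^+-\nu^-+V(\{0\})\delta_0$ is the required signed measure.

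The main obstacle is (2) $\Rightarrow$ (3). My plan is to define the positive and negative variations of $V$ by
\[
V^\pm(L)=\sup\left\{\sum_{i=1}^m (V(L_i)-V(L_{i-1}))^\pm : \{0\}=L_0\subset L_1\subset\cdots\subset L_m=L\right\},
\]
and to show that $V^\pm$ are radial continuous monotonic increasing valuations with $V-V(\{0\})=V^+-V^-$. Monotonicity is immediate from the definition, and the algebraic identity follows from a telescoping argument mimicking the classical proof for BV functions. The two delicate points are the valuation identity $V^\pm(A\cup B)+V^\pm(A\cap B)=V^\pm(A)+V^\pm(B)$, which I would establish by starting from a near-optimal chain in $A\cup B$ and extracting coupled chains in $A$, $B$, and $A\cap B$ through the slices $L_i\cap A$, $L_i\cap B$, and $L_i\cap A\cap B$, then invoking the valuation identity for $V$ at each step; and, most importantly, the radial continuity of $V^\pm$. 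The latter is where Theorem \ref{t:unif} becomes essential: when $L_k\to L$ radially, the uniform continuity of $V$ on bounded sets provides a uniform estimate on the increments $V(L_i)-V(L_{i-1})$ along arbitrary chains under small perturbations of endpoints, which allows near-optimal chains in $L_k$ to be deformed into chains in $L$ (and conversely) with a controlled error in the supremum defining $V^\pm$. This continuity step is the single hardest part of the argument.
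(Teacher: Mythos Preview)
Your cycle $(1)\Rightarrow(2)\Rightarrow(3)\Rightarrow(4)\Rightarrow(1)$ and your identification of $(2)\Rightarrow(3)$ as the substantive step, with Theorem~\ref{t:unif} supplying the continuity of the Jordan pieces, is exactly the paper's route. The only cosmetic difference is that the paper decomposes $V$ as $|V|-(|V|-V)$ rather than $V^+-V^-$: it proves directly that the total variation $f\mapsto|V|([0,f])$ is a continuous monotone valuation (Proposition~\ref{variation is valuation} and Lemma~\ref{variation continuous}), then observes that $|V|-V$ is increasing. Your $V^\pm$ would work equally well and the arguments you sketch for the valuation identity and for continuity are essentially the same as the paper's, just carried out on the positive/negative parts instead of the total variation.

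One point in your sketch of $(3)\Rightarrow(4)$ needs tightening. Applying Theorem~\ref{t:integral} separately to $V_1$ and $V_2$ produces representing pairs $(K_1,\mu_1)$ and $(K_2,\mu_2)$ with a priori different base measures, so writing $K=K_1-K_2$ is not immediately meaningful, and your cap-differentiation argument for monotonicity of $K_i$ is vaguer than it needs to be. The paper sidesteps both issues by returning to the explicit construction inside the proof of Theorem~\ref{t:integral}: there $K_0(s,\cdot)$ is the Radon--Nikodym derivative of $A\mapsto\tilde V(s\chi_A)$ with respect to a fixed $\mu$, so monotonicity of the (Borel-extended) valuation gives $\nu_{s_1}(A)\le\nu_{s_2}(A)$ for every $A$ and hence $K(s_1,\cdot)\le K(s_2,\cdot)$ $\mu$-a.e.\ directly, with no need to compare two measures or to localise on caps.
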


Due to its importance, we explicitly state the case of rotationally invariant valuations. 

\begin{corolario}\label{c:variacion}
Let $V:\mathcal S_0^n\longrightarrow \mathbb R$ be a radial continuous rotationally invariant valuation.  Then the following are equivalent: 

\begin{enumerate}
\item There exists a (signed) countably additive measure $\nu$ defined on the Borel sets of $\mathbb R^+$ such that, for every $L\in \mathcal S_0^n$, $$(\nu\otimes m)(L)=V(L),$$
where $m$ is the Lebesgue measure on $S^{n-1}$ and we use the natural identification between $\mathbb R^n$ and $\mathbb R^+\times S^{n-1}$. 

\item $V$ has bounded variation.

\item $V$ is the difference of two monotonic increasing continuous rotationally invariant valuations. 

\item There exists a continuous function of bounded variation $\theta:\mathbb R^+\longrightarrow \mathbb R$ such that, for every $L\in \mathcal S_0^{n}$, 

$$V(L)=\int_{S^{n-1}} \theta(\rho_L(t)) dm(t),$$
where $m$ is the Lebesgue measure on $S^{n-1}$.
\end{enumerate}
\end{corolario}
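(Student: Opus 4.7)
The strategy is to specialize Theorem \ref{t:variacion} to the rotationally invariant setting, using the integral representation of rotationally invariant radial continuous valuations from \cite{Vi,TraVi}. Those works establish that every such $V$ has the form $V(L)=\int_{S^{n-1}}\theta(\rho_L(t))\,dm(t)$ for some continuous $\theta:\mathbb R^+\to\mathbb R$; equivalently, in the kernel representation of Theorem \ref{t:integral}, rotational invariance allows one to take $K(s,t)=\theta(s)$ and $\mu=m$. The equivalence (2)$\Leftrightarrow$(3) is inherited directly from Theorem \ref{t:variacion}: if $V=V_1-V_2$ is a decomposition into monotonic increasing continuous valuations and $V$ is rotationally invariant, then averaging $V_1$ and $V_2$ over $SO(n)$ against Haar measure produces rotationally invariant monotonic pieces with the same difference.

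For (2)$\Rightarrow$(4), I would apply Theorem \ref{t:variacion}(4) to $V$: the kernel $K(\cdot,t)$ is continuous and of bounded variation for $\mu$-a.e.\ $t\in S^{n-1}$; since in the rotationally invariant case $K(s,t)=\theta(s)$, the function $\theta$ itself is continuous of bounded variation. For (4)$\Rightarrow$(1), after normalizing $\theta(0)=0$ (which is forced by $V(\{0\})=0$), the continuity and bounded variation of $\theta$ yield a unique finite signed Lebesgue-Stieltjes measure $\nu$ on $\mathbb R^+$ with $\nu([0,s])=\theta(s)$. Under the identification $\mathbb R^n\cong\mathbb R^+\times S^{n-1}$, so that $L=\{(r,t):r\leq\rho_L(t)\}$, Fubini's theorem yields
$$(\nu\otimes m)(L)=\int_{S^{n-1}}\nu([0,\rho_L(t)])\,dm(t)=\int_{S^{n-1}}\theta(\rho_L(t))\,dm(t)=V(L).$$
Finally, (1)$\Rightarrow$(2) is immediate: any finite signed measure has bounded total variation, which is inherited by its restriction to $\mathcal S_0^n$.

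The corollary is thus essentially a transcription of Theorem \ref{t:variacion} into the rotationally invariant category; the key ingredient reducing the general bivariate kernel $K(s,t)$ to a single-variable function $\theta(s)$ comes from the earlier representation theorems of \cite{Vi,TraVi}. The only slightly delicate step is the Lebesgue-Stieltjes construction in (4)$\Rightarrow$(1), where one needs $\theta$ of bounded variation (to get a finite signed measure) and continuous (so that $\nu$ is non-atomic and the identity $\nu([0,s])=\theta(s)$ holds without atomic corrections at the endpoints).
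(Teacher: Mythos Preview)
Your proof is correct and follows the same overall strategy as the paper: specialize Theorem~\ref{t:variacion} to the rotationally invariant case using the single-variable representation $V(L)=\int_{S^{n-1}}\theta(\rho_L)\,dm$ from \cite{Vi,TraVi}, and build the measure in $(4)\Rightarrow(1)$ via the Lebesgue--Stieltjes construction associated to $\theta$ (the paper writes this as $\nu([a,b])=V(b\uno)-V(a\uno)$, which is the same thing).

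There are two small differences worth noting. For $(2)\Rightarrow(3)$, the paper observes directly that the variation $|V|$ of a rotationally invariant valuation is again rotationally invariant, so the decomposition $V=|V|-(|V|-V)$ already lands in the right category; your Haar-averaging argument also works but is a detour. For reaching $(4)$, the paper goes through $(3)\Rightarrow(4)$ by applying \cite[Corollary~4.1]{TraVi} separately to each monotone piece to get increasing $\theta_1,\theta_2$ and then sets $\theta=\theta_1-\theta_2$; you instead go $(2)\Rightarrow(4)$ by invoking Theorem~\ref{t:variacion}(4) and identifying $K(s,t)$ with $\theta(s)$. Both routes are valid; the paper's is slightly more self-contained in that it avoids asserting that the kernel $K$ produced by Theorem~\ref{t:integral} can be taken independent of $t$ (which is true, but is really the content of the cited representation theorem rather than an automatic consequence of rotational invariance of $V$).
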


\subsection{Connections with previous work}

As we mentioned before, to the best of our knowledge there were no previous results characterizing radial continuous valuations on star bodies. At least not with modern notation. However, when writing \cite{TraVi}, we noted that the papers  \cite{CF, DO1, DO2, DO3, FK, FK:69} actually do speak about valuations. These works study {\em (orthogonally) additive functionals} on certain function spaces, but it is not difficult to show that, in our context, these correspond to valuations. This connection is given with full details in \cite{TraVi}.

The papers  \cite{CF, FK, FK:69} study additive functionals (our radial valuations) on $C(K)$ spaces  with increasing level of generality. The main result in those papers is totally comparable to our Theorem  \ref{t:integral} but with a big  difference: They  impose a priori much more restrictive conditions on the additive functional. In particular they demand that it is continuous, bounded on bounded sets and uniformly continuous on bounded sets, whereas we demand only continuity. It is not difficult to show that continuity implies bounded on bounded sets (see \cite[Lemma 3.1]{TraVi}). In contrast, it is quite hard to show that continuity alone implies uniform continuity on bounded sets. We only finish the proof of that fact in this article, using techniques from \cite{DO1, DO2, DO3} and the full power of our previous results in \cite{TraVi}. Using uniform continuity on bounded sets, the derivation of Theorem \ref{t:integral} follows using some ideas from \cite{DO1}.

\section{Preliminaries and notation}\label{s:preliminar}

Let $S^{n-1}$ denote the euclidean unit sphere in $\mathbb R^n$. We will denote $C(S^{n-1})$ and $B(S^{n-1})$ the spaces of continuous, respectively bounded Borel, real-valued functions on $S^{n-1}$. Also,  $C(S^{n-1})_+$, $B(S^{n-1})_+$ denote the cones of positive functions in $C(S^{n-1})$ and $B(S^{n-1})$, respectively.

Given $x \in \mathbb R^n$, let us denote $[0,x]=\{\lambda x:0\leq \lambda\leq 1\}$, the line segment joining the origin with $x$. A set $L\subset \mathbb R^n$ is a {\em star set} if for every $x\in L$, $[0,x]\subset L$. Let $\mathcal S^n$  denote the family of the star sets of $\mathbb R^n$.

Given $L\in \mathcal S^n$, its {\em radial function} $\rho_L:S^{n-1}\to\mathbb R_+$ is given by
$$
\rho_L(t)=  \sup \{c\geq 0 \, : \, ct\in L\}.
$$
A star set $L$ is called a {\em star body} whenever $\rho_L\in C(S^{n-1})_+$. Conversely, given $f\in C(S^{n-1})_+$ there exists a star body $L_f$ such that $f=\rho_{L_f}$.
Let $\mathcal S_0^n$ denote the set of star bodies in $\mathbb R^n$. Note that star bodies are always bounded.

Analogously, a star set $L$ is a {\em bounded Borel star set} if $\rho_L\in B(S^{n-1})_+$. We denote by $\mathcal S_b^n$ the set of bounded Borel star sets in $\mathbb R^n$.

Given two sets $K,L\in \mathcal S^n$, their {\em radial sum} $K\tilde{+}L$ is defined as the star set with radial function satisfying
$$
\rho_{K\tilde{+}L}=\rho_K+\rho_L.
$$
Note that $K\tilde{+}L\in \mathcal S_0^n$ (respectively, $\mathcal S_b^n$) whenever $K,L\in \mathcal S_0^n$ (respectively, $\mathcal S_b^n$).

The dual analog for the Hausdorff metric of convex bodies is the so-called {\em radial metric}, which can be defined by
$$
\delta(K,L)=\inf\{\varepsilon\geq 0 : K\subset L\tilde{+} \varepsilon B_n, L\subset K\tilde{+} \varepsilon B_n\},
$$
where $B_n$ denotes the euclidean unit ball of $\mathbb R^n$. It is easy to check that
$$
\delta(K,L)=\sup_{t\in S^{n-1}}|\rho_K(t)-\rho_L(t)|=\|\rho_K-\rho_L\|_\infty.
$$

\smallskip

A function $V:\mathcal S^n\longrightarrow \mathbb R$ is a {\em valuation} if for any $K,L\in\mathcal S^n$,
$$
V(K\cup L)+V(K\cap L)=V(K)+ V(L).
$$
Note that if $K,L\in\mathcal S_0^n$ (respectively, $\mathcal S_b^n$), then both $K\cup L$ and $K\cap L$ are in $\mathcal S_0^n$ (respectively, $\mathcal S_b^n$). It is easy to see that
$$
\rho_{K\cup L}=\rho_K\vee \rho_L, \hspace{1cm} \rho_{K\cap L}=\rho_K\wedge \rho_L,
$$
where for functions $f_1, f_2\in B(S^{n-1})_+$, we denote
$$
(f_1\vee f_2)(t)=\max \{f_1(t), f_2(t)\},
$$
$$
(f_1\wedge f_2)(t)=\min \{f_1(t), f_2(t)\}.
$$

With this notation, a valuation $V:\mathcal S_0^n\rightarrow \mathbb R$ induces a mapping $\tilde V:C(S^{n-1})_+\rightarrow \mathbb R$
given by
$$
\tilde V(f)=V(L_f),
$$
where $L_f$ is a star body whose radial function satisfies $\rho_{L_f}=f$. If $V$ is continuous with respect to the radial metric, then  $\tilde V$ is continuous with respect to the $\|\cdot\|_\infty$ norm in $C(S^{n-1})_+$ and satisfies
$$
\tilde V(f)+\tilde V(g)=\tilde V(f\vee g)+\tilde V(f\wedge g)
$$
for every $f,g\in C(S^{n-1})_+$. Conversely, every such function $\tilde V$ induces a continuous valuation on $\mathcal S_0^n$. Similarly, a valuation $V:\mathcal S_b^n\rightarrow \mathbb R$ induces a function $\tilde V:B(S^{n-1})_+\rightarrow \mathbb R$ with analogous properties, and vice versa. Throughout the text, both $V$ and $\tilde V$ will be refered to as valuations (in the corresponding framework).

The following result from \cite[Theorem 1.1.]{TraVi} will be key for our purposes:

\begin{teo}\label{t:extensiontoborel}
If $V:\mathcal S_0^n\longrightarrow \mathbb R$ is a radial continuous valuation, then there exists a unique radial continuous valuation on $\mathcal S_b^n$ extending $V$.
\end{teo}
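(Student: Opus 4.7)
Via the identification $L\mapsto\rho_L$, a radial continuous valuation on $\mathcal S_0^n$ corresponds to a $\|\cdot\|_\infty$-continuous functional $\tilde V:C(S^{n-1})_+\to\mathbb R$ satisfying the lattice identity $\tilde V(f)+\tilde V(g)=\tilde V(f\vee g)+\tilde V(f\wedge g)$, and analogously for $\mathcal S_b^n$ and $B(S^{n-1})_+$. My plan is thus to produce a unique $\|\cdot\|_\infty$-continuous lattice-additive extension of $\tilde V$ from $C(S^{n-1})_+$ to $B(S^{n-1})_+$, which then transfers back to star sets.

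For uniqueness, given two such extensions $\tilde V_1,\tilde V_2$, their difference $W=\tilde V_1-\tilde V_2$ is $\|\cdot\|_\infty$-continuous, lattice-additive, and vanishes on $C(S^{n-1})_+$. I would consider $\mathcal N=\{f\in B(S^{n-1})_+:W(f)=0\}$ and note that the identity $W(f\vee g)=W(f)+W(g)-W(f\wedge g)$ propagates membership under the lattice operations. Combining $\|\cdot\|_\infty$-continuity with boundedness of continuous valuations on bounded sets (\cite[Lemma 3.1]{TraVi}), one should further show that $\mathcal N$ is closed under uniformly bounded pointwise monotone limits. A Baire-class induction starting from $C(S^{n-1})_+\subset\mathcal N$ then forces $\mathcal N=B(S^{n-1})_+$.

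For existence, I would first extend $\tilde V$ to bounded upper semicontinuous functions via a Moreau--Yosida-type regularisation: for usc $f\in B(S^{n-1})_+$, the functions $f_n(t)=\sup_{s\in S^{n-1}}\{f(s)-n\,d(s,t)\}$ lie in $C(S^{n-1})_+$ and satisfy $f_n\downarrow f$ pointwise, so I would set $\tilde V(f):=\lim_n\tilde V(f_n)$. A symmetric construction with infima handles lower semicontinuous functions, and general bounded Borel functions are then reached by sandwiching between usc and lsc approximations and invoking the lattice identity to glue values. The main obstacle is justifying that $\lim_n\tilde V(f_n)$ exists and is independent of the approximating sequence: since $f_n\downarrow f$ need not be uniform when $f$ is merely usc, the $\|\cdot\|_\infty$-continuity of $\tilde V$ does not apply directly. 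I would overcome this by comparing two approximating sequences $(f_n)$ and $(g_n)$ through their joint envelopes $f_n\vee g_m$ and $f_n\wedge g_m$ using the lattice identity, and exploiting boundedness of $\tilde V$ on uniformly bounded subsets of $C(S^{n-1})_+$, to extract a Cauchy estimate for $(\tilde V(f_n))$ and identify its limit with the corresponding limit for $(g_n)$.
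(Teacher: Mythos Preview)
First, note that the present paper does not contain a proof of this theorem: it is quoted verbatim from \cite[Theorem~1.1]{TraVi} and used as a black box. So there is no ``paper's own proof'' here to compare with; any comparison would have to be with the argument in \cite{TraVi}.

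That said, your proposal has a genuine gap at its core. In the existence step you acknowledge the real obstacle---that for $f_n\downarrow f$ with $f$ merely upper semicontinuous the convergence is only pointwise, so $\|\cdot\|_\infty$-continuity of $\tilde V$ does not apply---but the remedy you offer does not work. For a single decreasing sequence $(f_n)$ the lattice identity is vacuous (since $f_n\vee f_m=f_{\min(n,m)}$ and $f_n\wedge f_m=f_{\max(n,m)}$), so it cannot produce a Cauchy estimate. For two sequences $(f_n),(g_m)$ both decreasing to $f$, the identity $\tilde V(f_n)+\tilde V(g_m)=\tilde V(f_n\vee g_m)+\tilde V(f_n\wedge g_m)$ only relates four unknown quantities; boundedness of $\tilde V$ on bounded sets tells you each term is bounded, but gives no oscillation control. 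There is no mechanism in your outline that converts pointwise convergence into convergence of $\tilde V$-values. The same defect undermines your uniqueness argument: you assert that $\mathcal N$ is closed under bounded pointwise monotone limits, but $\|\cdot\|_\infty$-continuity together with boundedness on bounded sets simply does not yield this.

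The missing ingredient is a quantitative link between $\tilde V$ and the underlying measure-theoretic structure of $S^{n-1}$. In \cite{TraVi} this is supplied by the control measures $\mu_\lambda$ (recalled in \eqref{def:mu} above): one shows that $|\tilde V(f)|$ is dominated by $\mu_\lambda$ on the support of $f$ when $\|f\|_\infty\le\lambda$, and this absolute-continuity estimate is exactly what allows one to pass from pointwise (or almost-everywhere) convergence to convergence of $\tilde V$-values via Egorov-type arguments---compare Lemma~\ref{pointwise continuity} in the present paper, which is a consequence of the extension, not a tool available to construct it. Without introducing some analogue of these control measures, your scheme cannot be completed.
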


Given  a function $f:S^{n-1} \longrightarrow \mathbb R$, we denote the support of $f$ by $$supp(f)=\overline{\{t\in S^{n-1} : f(t)\not = 0\}},$$ and for any set $G\subset S^{n-1}$, we will write $f\prec G$ if $\supp(f)\subset G$.

Given a valuation $V$, for each $\lambda>0$, and every Borel set $A\subset S^{n-1}$ we define
\begin{equation}\label{def:mu}
\mu_\lambda(A)=\inf\Big\{ \sup \{\tilde{V}(f): \, f\prec G, \, \|f\|_\infty\leq \lambda\}:\,  A\subset G \mbox{ open}\Big\}.
\end{equation}
It is shown in \cite{Vi} that $\mu_\lambda$ defines a finite regular Borel measure on $\Sigma_n$ (the $\sigma$-algebra of Borel subsets of $S^{n-1}$) which controls the valuation $V$ (see also \cite[Observation 5.1]{TraVi}). In particular, we can consider the countably additive measure
$$
\mu=\sum_{\lambda=1}^\infty \frac{ \mu_\lambda}{2^{\lambda}\|\mu_\lambda\|}.
$$

\begin{prop}\label{p:fi}
Let $\tilde V:B(S^{n-1})_+\longrightarrow\mathbb R$ be a continuous valuation. There is a continuous function $\Phi:B(S^{n-1})_+\longrightarrow L_1(\mu)$ such that for $f\in B(S^{n-1})_+$ we have
$$
\tilde V(f)=\int_{S^{n-1}} \Phi(f)d\mu.
$$
Moreover, for every $f\in B(S^{n-1})_+$ and $A\in\Sigma_n$, $\Phi(f\chi_A)=\Phi(f)\chi_A$.
\end{prop}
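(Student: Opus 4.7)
The plan is to first define $\Phi$ on simple positive Borel functions using the integral representation from \cite[Theorem 1.2]{TraVi}, and then extend by uniform continuity on bounded sets to all of $B(S^{n-1})_+$. After subtracting the constant $\tilde V(0)$ (which does not affect any of the conclusions), we may assume $\tilde V(0)=0$, and consequently that the kernel $K$ from \cite[Theorem 1.2]{TraVi} satisfies $K(0,\cdot)=0$ $\mu$-almost everywhere. For a simple function $s=\sum_{i=1}^N c_i\chi_{A_i}\in B(S^{n-1})_+$ with the $A_i$ disjoint Borel, set
\[
\Phi(s)(t)=K(s(t),t)=\sum_{i=1}^N K(c_i,t)\chi_{A_i}(t).
\]
This belongs to $L_1(\mu)$ since $\int_{A_i}K(c_i,t)d\mu(t)=\tilde V(c_i\chi_{A_i})$ is finite, and applying \cite[Theorem 1.2]{TraVi} to the simple function $s\chi_A$ yields $\int_A\Phi(s)d\mu=\tilde V(s\chi_A)$ for every Borel $A\subset S^{n-1}$, whence $\Phi(s\chi_A)=\Phi(s)\chi_A$.

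The key technical step is the uniform continuity of $\Phi$ on bounded sets of simple functions. Given simple $f,g\in B(S^{n-1})_+$ with $\|f\|_\infty,\|g\|_\infty\leq\lambda$, let $E=\{t\in S^{n-1}:\Phi(f)(t)\geq\Phi(g)(t)\}$, a Borel set. Using $\int_E\Phi(h)d\mu=\tilde V(h\chi_E)$ from the previous step,
\[
\|\Phi(f)-\Phi(g)\|_{L_1(\mu)}=\bigl[\tilde V(f\chi_E)-\tilde V(g\chi_E)\bigr]+\bigl[\tilde V(g\chi_{E^c})-\tilde V(f\chi_{E^c})\bigr].
\]
Each of $f\chi_E,g\chi_E,f\chi_{E^c},g\chi_{E^c}$ is a simple function bounded by $\lambda$, and $\|f\chi_E-g\chi_E\|_\infty\leq\|f-g\|_\infty$, likewise on $E^c$. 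By the uniform continuity of $\tilde V$ on bounded sets of $B(S^{n-1})_+$ (Theorem \ref{t:unif}), the right-hand side tends to $0$ as $\|f-g\|_\infty\to 0$, uniformly over simple $f,g$ bounded by $\lambda$ in sup norm.

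Since simple Borel functions are dense in $B(S^{n-1})_+$ in the sup norm, uniform continuity on each bounded set lets $\Phi$ extend uniquely to a continuous map $\Phi:B(S^{n-1})_+\to L_1(\mu)$. The equality $\tilde V(f)=\int\Phi(f)d\mu$ is preserved in the limit because both sides are sup-norm continuous on bounded sets (the left by hypothesis, the right by continuity of $\Phi$ into $L_1(\mu)$). Finally, if simple $s_n\to f$ in sup norm then $s_n\chi_A\to f\chi_A$ in sup norm, so passing to the limit in $\Phi(s_n\chi_A)=\Phi(s_n)\chi_A$ and using that multiplication by $\chi_A$ is a contraction on $L_1(\mu)$ yields $\Phi(f\chi_A)=\Phi(f)\chi_A$. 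The main obstacle in this scheme is the uniform continuity step for $\Phi$, which crucially rests on the nontrivial fact (Theorem \ref{t:unif}) that radial continuous valuations on $\mathcal S_b^n$ are uniformly continuous on bounded sets; everything else is a routine density argument, and the Hahn-style splitting via $E$ is precisely what allows the $L_1$-modulus of $\Phi$ to be converted into differences of $\tilde V$ on bounded sets of simple functions.
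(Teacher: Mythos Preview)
Your argument has a \emph{circular dependency} within the paper's logical structure. You invoke Theorem~\ref{t:unif} (uniform continuity of $\tilde V$ on bounded sets) as the key technical ingredient, but in the paper Theorem~\ref{t:unif} is proved \emph{using} the map $\Phi$ constructed in Proposition~\ref{p:fi}: the proof of Theorem~\ref{t:unif} explicitly begins by letting $\Phi:B(S^{n-1})_+\to L_1(\mu)$ be the mapping given in Proposition~\ref{p:fi}, and then uses Lemma~\ref{l:aeconverge} (also formulated in terms of $\Phi$) together with the Kadec--Pe\l czy\'nski splitting. So your scheme, which proves Proposition~\ref{p:fi} from Theorem~\ref{t:unif}, cannot serve as a proof of the proposition in this paper.

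The paper's own proof avoids this entirely by using only \emph{pointwise} continuity of $\tilde V$. For each fixed $f$ one defines the set function $\nu_f(A)=\tilde V(f\chi_A)$, checks it is finitely additive and $\mu$-absolutely continuous (hence countably additive), and lets $\Phi(f)$ be its Radon--Nikodym derivative. The localisation $\Phi(f\chi_A)=\Phi(f)\chi_A$ is then immediate. For continuity, the trick is that for any Borel $A$ and any $g$ with $\|f-g\|_\infty<\delta$ one has
\[
\Big|\int_A \Phi(f)-\Phi(g)\,d\mu\Big|=\big|\tilde V(f\chi_A)-\tilde V(g\chi_A)\big|=\big|\tilde V(f)-\tilde V(g\chi_A+f\chi_{A^c})\big|,
\]
and the right-hand side is small by continuity of $\tilde V$ \emph{at the single point $f$}. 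Applying this with $A=\{\Phi(f)>\Phi(g)\}$ and its complement gives $\|\Phi(f)-\Phi(g)\|_1<2\varepsilon$. This is exactly your Hahn-type splitting idea, but executed with ordinary continuity rather than uniform continuity, which is what makes the argument non-circular. Your density/extension strategy via \cite[Theorem~1.2]{TraVi} is not needed once $\Phi$ is defined directly by Radon--Nikodym for every $f\in B(S^{n-1})_+$.
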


\begin{proof}
Fix $f\in B(S^{n-1})_+$ and consider for $A\in \Sigma_n$
$$
\nu_f(A)=\tilde V(f\chi_A).
$$
Since $V$ is a valuation, it follows that $\nu_f$ defines a finitely additive measure on $\Sigma_n$. Moreover, $\nu_f$ is absolutely continuous with respect to $\mu$, so that $\nu_f$ is actually countably additive. Let $\Phi(f)$ be the Radon-Nikodym derivative of $\nu_f$ with respect to $\mu$. Hence, for every $A\in \Sigma_n$ we have
$$
\tilde V(f\chi_A)=\int_A \Phi(f)d\mu.
$$
Moreover, if $\mu(A\cap B)=0$, then $\nu_{f\chi_A}(B)=0$, which yields that $\Phi(f\chi_A)=\Phi(f)\chi_A$, for every $A\in \Sigma_n$.

In order to see that $\Phi:B(S^{n-1})_+\longrightarrow L_1(\mu)$ is continuous, we will first need the following:

\textbf{Claim:} Given $f\in B(S^{n-1})_+$,  for every $\varepsilon>0$ there exists $\delta>0$ such that for every Borel set $A\in \Sigma_n$ and $g\in B(S^{n-1})_+$, if $\|f-g\|_\infty<\delta$, then
$$
\Big|\int_A\Phi(f)-\Phi(g)d\mu\Big|<\varepsilon.
$$

Indeed, the continuity of $\tilde V$ implies that, given $\varepsilon$, there exists $\delta$ such that, for every $h\in B(S^{n-1})^+$, if $\|f -h\|_\infty<\delta$ then $|\tilde V(f)-\tilde V(h)|<\varepsilon$.

Let $A\subset S^{n-1}$ be a Borel set, and $g\in B(S^{n-1})_+$ with $\|f-g\|_\infty<\delta$. Let $h=g\chi_A+f\chi_{A^c}$, which clearly satisfies $\|f-h\|_\infty<\delta$. It follows that
$$
\Big|\int_A\Phi(f)-\Phi(g)d\mu\Big|=|\tilde V(f\chi_A)-\tilde V(g\chi_A)|=|\tilde V(f)-\tilde V(h)|<\varepsilon.
$$
This proves the claim.

Finally, given $f\in B(S^{n-1})_+$ and $\varepsilon>0$, let $\delta>0$ be as in the claim. Suppose $g\in B(S^{n-1})_+$ satisfies $\|f-g\|_\infty<\delta$. Let
$$
A=\{t\in S^{n-1}:\Phi(f)(t)-\Phi(g)(t)>0\}.
$$
We have that
$$
\|\Phi(f)-\Phi(g)\|_1=\int_A\Phi(f)-\Phi(g)d\mu + \int_{A^c}\Phi(g)-\Phi(f)d\mu<2\varepsilon.
$$
\end{proof}

The previous result can be considered as a factorization property of valuations on $B(S^{n-1})_+$, in the sense that there is a commutative diagram
$$\xymatrix{B(S^{n-1})_+\ar[rr]^{\tilde V}\ar[rd]_\Phi&&\mathbb R\\
&L_1(\mu)\ar[ur]_{i}&}
$$
with $i(f)=\int_{S^{n-1}}fd\mu$, so that all arrows are continuous valuations.

\section{Uniform continuity on bounded sets}
In this section we prove our main technical result Theorem \ref{t:unif}. It states that continuous valuations on $\mathcal S_0^{n}$ are uniformly continuous on bounded sets.

The main ingredients of the proof are our results from \cite{TraVi}, techniques appearing in \cite{DO3}  and the well known Kadec-Pe\'lcynski dichotomy from functional analysis.

Throughout this section,  $V$ will be a continuous valuation on $\mathcal S_0^n$, and  $\tilde V:B(S^{n-1})_+\longrightarrow \mathbb R$ will denote the induced extension given by Theorem \ref{t:extensiontoborel}.

\begin{lema}\label{pointwise continuity}
Let $\lambda>0$ and let $\mu_\lambda$ be the measure associated to $V$, $\lambda$ defined in Equation (\ref{def:mu}). Let also  $(f_n)_{n\in \mathbb N}\subset B(S^{n-1})_+$, with $\|f_n\|_\infty\leq \lambda$ for every $n\in \mathbb N$, and $f\in B(S^{n-1})_+$ be such that $f_n\rightarrow f$ $\mu_\lambda$-almost everywhere. Then $\tilde V(f_n)\rightarrow \tilde V(f)$.
\end{lema}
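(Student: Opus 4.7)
The strategy is to upgrade $\mu_\lambda$-a.e.\ convergence to uniform convergence on a large subset via Egorov's theorem, split $f_n$ and $f$ accordingly using the valuation property, and then handle the two resulting pieces separately: the uniformly convergent piece by the continuity of $\tilde V$, and the residual piece by the very definition of the control measure $\mu_\lambda$.

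\textbf{Step 1 (Egorov).} Since $\mu_\lambda$ is a finite regular Borel measure on the compact space $S^{n-1}$ and $f_n\to f$ pointwise $\mu_\lambda$-a.e., Egorov's theorem provides, for each $\varepsilon>0$, a closed set $E_\varepsilon\subset S^{n-1}$ with $\mu_\lambda(S^{n-1}\setminus E_\varepsilon)<\varepsilon$ on which $f_n\to f$ uniformly. By outer regularity of $\mu_\lambda$, one may enlarge $S^{n-1}\setminus E_\varepsilon$ slightly to an open set $G_\varepsilon\supset S^{n-1}\setminus E_\varepsilon$ with $\mu_\lambda(G_\varepsilon)<2\varepsilon$.

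\textbf{Step 2 (Splitting via the valuation identity).} The functions $f_n\chi_{E_\varepsilon}$ and $f_n\chi_{S^{n-1}\setminus E_\varepsilon}$ belong to $B(S^{n-1})_+$ and have disjoint supports, so their pointwise minimum is $0$ and their maximum is $f_n$. The valuation equation for $\tilde V$ (which holds on $B(S^{n-1})_+$) gives
$$
\tilde V(f_n)+\tilde V(0)=\tilde V(f_n\chi_{E_\varepsilon})+\tilde V(f_n\chi_{S^{n-1}\setminus E_\varepsilon}),
$$
and the analogous identity holds for $f$. Subtracting these identities, the constant $\tilde V(0)$ cancels and one gets
$$
\tilde V(f_n)-\tilde V(f)=\bigl[\tilde V(f_n\chi_{E_\varepsilon})-\tilde V(f\chi_{E_\varepsilon})\bigr]+\bigl[\tilde V(f_n\chi_{S^{n-1}\setminus E_\varepsilon})-\tilde V(f\chi_{S^{n-1}\setminus E_\varepsilon})\bigr].
$$

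\textbf{Step 3 (Uniformly convergent part).} Since $f_n\chi_{E_\varepsilon}$ and $f\chi_{E_\varepsilon}$ both vanish off $E_\varepsilon$ and $f_n\to f$ uniformly on $E_\varepsilon$, we have $\|f_n\chi_{E_\varepsilon}-f\chi_{E_\varepsilon}\|_\infty\to 0$. The continuity of $\tilde V$ on $B(S^{n-1})_+$ with respect to $\|\cdot\|_\infty$ (guaranteed by Theorem \ref{t:extensiontoborel} and the reformulation in Section \ref{s:preliminar}) then yields $\tilde V(f_n\chi_{E_\varepsilon})\to \tilde V(f\chi_{E_\varepsilon})$.

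\textbf{Step 4 (Residual part, the main obstacle).} The crux is to bound $|\tilde V(h\chi_{S^{n-1}\setminus E_\varepsilon})|$ uniformly for $h\in\{f_n,f\}$. Both $f_n\chi_{S^{n-1}\setminus E_\varepsilon}$ and $f\chi_{S^{n-1}\setminus E_\varepsilon}$ are supported in the open set $G_\varepsilon$ and bounded in $\|\cdot\|_\infty$ by $\lambda$. By the very definition (\ref{def:mu}) of $\mu_\lambda$, the supremum of $\tilde V(g)$ over $g\in B(S^{n-1})_+$ with $g\prec G_\varepsilon$ and $\|g\|_\infty\le \lambda$ is at most $\mu_\lambda(G_\varepsilon)<2\varepsilon$. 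Applying the same reasoning to the valuation $-\tilde V$ (whose associated control measure is also absolutely continuous with respect to $\mu_\lambda$, as can be seen by repeating the construction of $\mu_\lambda$) provides a matching lower bound. Combining gives $|\tilde V(h\chi_{S^{n-1}\setminus E_\varepsilon})|\le C\varepsilon$ for some constant $C$ depending only on $\lambda$ and $\tilde V$, uniformly in $h\in\{f_n,f\}$.

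\textbf{Conclusion.} Letting $n\to\infty$ in the splitting identity from Step 2 and using Steps 3 and 4 yields $\limsup_n|\tilde V(f_n)-\tilde V(f)|\le 2C\varepsilon$, and since $\varepsilon$ was arbitrary, $\tilde V(f_n)\to\tilde V(f)$. The hardest point is Step 4: it requires the construction of $\mu_\lambda$ from \cite{Vi,TraVi} to yield two-sided control of $|\tilde V|$ on functions supported in sets of small $\mu_\lambda$-measure. Once this quantitative control is in hand, the rest is a standard Egorov-plus-continuity argument.
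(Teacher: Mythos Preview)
Your argument follows essentially the same route as the paper's: Egorov, then split $f_n$ (and $f$) into the piece supported on the ``good'' set and the piece supported on the small residual set, use $\|\cdot\|_\infty$-continuity of $\tilde V$ on the first piece, and invoke the defining property of $\mu_\lambda$ on the second. The paper's write-up is terser---it goes straight from $\mu_\lambda(A)\le \varepsilon/2$ to $|\tilde V(f_n\chi_A)-\tilde V(f\chi_A)|\le \varepsilon/2$, relying on the cited fact that $\mu_\lambda$ ``controls'' $V$---but the skeleton is identical.

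The one place your write-up is shakier than the paper's is the justification in Step~4. You argue for the lower bound on $\tilde V(h\chi_{S^{n-1}\setminus E_\varepsilon})$ by applying the construction of $\mu_\lambda$ to $-\tilde V$ and asserting that the resulting measure is absolutely continuous with respect to $\mu_\lambda$. That absolute-continuity claim is not obvious from the bare formula~(\ref{def:mu}) and you do not prove it; without it, a set that is $\mu_\lambda$-small need not be small for the control measure of $-\tilde V$, so the Egorov set you chose could fail to do its job. The paper sidesteps this by quoting \cite{Vi} and \cite[Observation~5.1]{TraVi}, where the control measure is built so that $|\tilde V(g)|$ (not just $\tilde V(g)$) is dominated by $\mu_\lambda$ on functions supported in small sets with $\|g\|_\infty\le\lambda$. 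You should invoke that two-sided control directly rather than improvise it via $-\tilde V$; once you do, Step~4 reduces to a one-line estimate and your proof matches the paper's.
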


\begin{proof}
Let $\epsilon>0$. Using Egorov's Theorem, we obtain $A\in\Sigma_n$, with $\mu_\lambda(A)\leq \frac{\epsilon}{2}$, such that $f_n\chi_{A^c}\rightarrow f\chi_{A^c}$ uniformly. Then, using the continuity of $\tilde{V}$, we obtain the existence of $n_0$ such that, for every $n\geq n_0$, $$|\tilde{V}(f_n\chi_{A^c})-\tilde{V}(f\chi_{A^c})|<\frac{\epsilon}{2}.$$

Therefore, for every $n\geq n_0$,

$$|\tilde{V}(f_n)-\tilde{V}(f)|\leq |\tilde{V}(f_n\chi_{A^c})-\tilde{V}(f\chi_{A^c})| + |\tilde{V}(f_n\chi_{A})-\tilde{V}(f\chi_{A})| \leq \epsilon. $$

\end{proof}

We will need the following technical result (see \cite[Lemma 2.1]{DO3}).

\begin{lema}\label{ortogonalidad}
Let $\Phi:B(S^{n-1})_+\longrightarrow L_1(\mu)$ such that $$\Phi(f\chi_A)=\Phi(f)\chi_A$$ for every $f\in B(S^{n-1})_+$ and $A\in \Sigma_n$. It holds that:

\begin{enumerate}

\item For every finite sequences $(f_i)_{i=1}^n$, $(g_i)_{i=1}^n$ in $B(S^{n-1})_+$, there exist  $f,g\in B(S^{n-1})_+$ such that,
\begin{enumerate}
\item[(i)] $|f(t)|\leq \max_i |f_i(t)|$ for every $t\in S^{n-1}$,
\item[(ii)] $|g(t)|\leq \max_i |g_i(t)|$ for every $t\in S^{n-1}$,
\item[(iii)] $|f(t)-g(t)|\leq \max_i |f_i(t)-g_i(t)|$ for every $t\in S^{n-1}$,
\item[(iv)] $\Phi(f)-\Phi(g)=\sup_{1\leq i \leq n} (\Phi(f_i)-\Phi(g_i)).$
\end{enumerate}

\item For every finite sequences $(f_i)_{i=1}^n$, $(g_i)_{i=1}^n$ in $B(S^{n-1})_+$, let $f=\sup_i f_i$ for $\mu$ almost every $t$. There exists $g\in B(S^{n-1})_+$ such that,
\begin{enumerate}
\item[(i)]  $|g(t)|\leq \sup_i |g_i(t)|$ for every $t\in S^{n-1}$,
\item[(ii)] $|f(t)-g(t)|\leq \sup_i |f_i(t)-g_i(t)|$ for every $t\in S^{n-1}$,
\item[(iii)] $\Phi(f)(t)-\Phi(g)(t) \geq \inf_i \Phi(f_i)(t)-\Phi(g_i)(t)$ in $\mu$-almost every $t$.
\end{enumerate}
\end{enumerate}
\end{lema}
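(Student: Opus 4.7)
The plan is to exploit the localization property $\Phi(f\chi_A)=\Phi(f)\chi_A$ to reduce everything to pointwise bookkeeping on a carefully chosen measurable partition of $S^{n-1}$. The underlying observation is the following: if $\{A_i\}_{i=1}^n$ is a measurable partition of $S^{n-1}$ and we set $f=\sum_{i=1}^n f_i\chi_{A_i}$, then for each $j$ we have $\Phi(f)\chi_{A_j}=\Phi(f\chi_{A_j})=\Phi(f_j\chi_{A_j})=\Phi(f_j)\chi_{A_j}$, so summing over $j$ yields $\Phi(f)=\sum_{i=1}^n\Phi(f_i)\chi_{A_i}$. In other words, $\Phi$ behaves additively on functions with pairwise disjoint supports, and this is the only algebraic fact about $\Phi$ we will use.

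For part (1), define the Borel sets
\begin{equation*}
B_i=\Bigl\{t\in S^{n-1}\, : \, \Phi(f_i)(t)-\Phi(g_i)(t)=\sup_{1\leq j\leq n}\bigl(\Phi(f_j)(t)-\Phi(g_j)(t)\bigr)\Bigr\},
\end{equation*}
and disjointify by $A_1=B_1$ and $A_i=B_i\setminus(A_1\cup\cdots\cup A_{i-1})$. Since the sup is over finitely many representatives in $L_1(\mu)$, the sets $B_i$ cover $S^{n-1}$ up to a $\mu$-null set, and $\{A_i\}$ is a Borel partition. Now set $f=\sum_{i=1}^n f_i\chi_{A_i}$ and $g=\sum_{i=1}^n g_i\chi_{A_i}$. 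Properties (i), (ii), (iii) are immediate because on each $A_i$ one simply has $f=f_i$ and $g=g_i$, so each pointwise inequality reduces to the trivial bound by the $i$-th term. Property (iv) follows from the opening paragraph: $\Phi(f)-\Phi(g)=\sum_{i=1}^n(\Phi(f_i)-\Phi(g_i))\chi_{A_i}$, which equals the pointwise supremum by the very definition of the sets $A_i$.

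For part (2), the role of the "argmax" partition is taken by the sets where the prescribed $f=\sup_i f_i$ is realized. Define $C_i=\{t:f(t)=f_i(t)\}$, which are Borel and cover $S^{n-1}$ because the sup is over finitely many functions; disjointify to obtain a partition $\{A_i\}$ with $A_i\subset C_i$. Set $g=\sum_{i=1}^n g_i\chi_{A_i}$. Again (i) and (ii) are immediate on each $A_i$. For (iii) the identity $f=\sum_i f_i\chi_{A_i}$ forces $\Phi(f)=\sum_i\Phi(f_i)\chi_{A_i}$, and similarly for $\Phi(g)$, so on each $A_i$ we have $\Phi(f)-\Phi(g)=\Phi(f_i)-\Phi(g_i)\geq\inf_{1\leq j\leq n}(\Phi(f_j)-\Phi(g_j))$ pointwise $\mu$-a.e., giving the desired lower bound.

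The only subtle point is the one of measurability: the $B_i$ and $C_i$ are defined through representatives of $L_1(\mu)$-classes and of $B(S^{n-1})_+$-elements, so the construction is well defined up to $\mu$-null modifications, which is harmless because the conclusions in (iv) and in (2)(iii) are stated $\mu$-a.e. I expect this bookkeeping---turning the identity $f=\sum f_i\chi_{A_i}$ into $\Phi(f)=\sum\Phi(f_i)\chi_{A_i}$ via the localization property and a clean choice of the partition---to be the entirety of the argument; there is no analytic obstacle once the right partition is identified.
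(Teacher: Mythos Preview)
Your proposal is correct and follows essentially the same approach as the paper: both proofs define the sets $B_i$ (respectively $C_i$) exactly as you do, disjointify them into a partition $\{A_i\}$, and set $f=\sum f_i\chi_{A_i}$, $g=\sum g_i\chi_{A_i}$. Your write-up is in fact more detailed than the paper's, which simply states ``it is easy to check that these fulfil the required conditions'' without spelling out the localization argument $\Phi(\sum f_i\chi_{A_i})=\sum\Phi(f_i)\chi_{A_i}$ that you make explicit.
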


\begin{proof} (1). Let
$$
B^j=\{t\in S^{n-1} : \Phi(f_j)(t)-\Phi(g_j)(t)=\sup_{1\leq i \leq n} (\Phi(f_i)-\Phi(g_i)\}.
$$
Let $A^1=B^1$ and, for $2\leq j \leq n$, let $A^j=B^j\setminus \bigcup_{i=1}^{j-1}A^i$. Then, we define
$$
f=\sum_{i=1}^n f_i(t)\chi_{A^i}(t), \quad g=\sum_{i=1}^n g_i(t)\chi_{A^i}(t).
$$
It is easy to check that these fulfil the required conditions.

(2). Let $$B^j=\{t\in S^{n-1} : f_j(t)=\sup_i f_i(t)\}.$$ Let $A^1=B^1$ and, for $2\leq j \leq n$, let $A^j=B^j\setminus \bigcup_{i=1}^{j-1}A^i$. Then, we define
$$
g=\sum_{i=1}^n g_i(t)\chi_{A^i}(t).
$$
It is easy to check that $g$ satisfies the conditions demanded.
\end{proof}

The following is a regularity property related to the lattice structure of $L_1(\mu)$. It has also been used in \cite{DO3} in a different space for similar purposes.

\begin{lema}\label{regularityL1}
Let $\mu$ be a finite positive measure on a measurable space $(\Omega,\Sigma)$. Let $(E_n)_{n\in \mathbb N}$ be a sequence of countable subsets of $L_1(\mu)$. That is, for each $n\in \mathbb N$ let
$E_n:=\{ \varphi_j^n: j\in \mathbb N\}, $ where, for every $j\in \mathbb N$, $\varphi_j^n\in L_1(\mu)$. Take
$$
\varphi^n:=\sup_{j\in \mathbb N} \varphi_j^n,
$$
and assume that
$$
\lim_{n\rightarrow \infty} \varphi^n=\varphi
$$
$\mu$-almost everywhere, for some measurable function $\varphi$. Then, for every $n\in \mathbb N$ there exists $j_n\in \mathbb N$ such that, $\mu$-almost everywhere, we have
$$
\lim_{n\rightarrow \infty} \sup_{1\leq j \leq j_n} \varphi_j^{j_n}=\varphi.
$$
\end{lema}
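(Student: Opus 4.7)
The plan is to exploit that $\mu$ is finite in order to upgrade the given pointwise almost everywhere convergence to convergence in measure, and then perform a diagonal extraction controlled by a Borel--Cantelli argument.

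First I would record two basic observations. For each $m\in\mathbb N$, set $g_k^m := \sup_{1\le j\le k}\varphi_j^m$. Then $(g_k^m)_k$ is a nondecreasing sequence of measurable functions which converges pointwise to $\varphi^m$ as $k\to\infty$; since $\mu(\Omega)<\infty$, this convergence also takes place in measure. Likewise, the hypothesis $\varphi^m\to\varphi$ $\mu$-a.e.\ combined with finiteness of $\mu$ yields $\varphi^m\to\varphi$ in measure.

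Next I would select the indices $j_n$ inductively, choosing $j_n$ sufficiently large at stage $n$ so that simultaneously $\mu\{|\varphi^{j_n}-\varphi|>1/(2n)\}<1/2^{n+1}$ and $\mu\{\sup_{1\le j\le j_n}\varphi_j^{j_n}<\varphi^{j_n}-1/(2n)\}<1/2^{n+1}$. The first bound is available for every sufficiently large $j_n$ by the convergence in measure of $(\varphi^m)$. The second is the main obstacle, since it requires the truncation level $j_n$ to already exceed the threshold beyond which the partial suprema within the family $E_{j_n}$ approximate $\varphi^{j_n}$ in measure; carrying this out calls for an inductive diagonal choice exploiting that, for each fixed $m$, $(g_k^m)_k$ converges in measure to $\varphi^m$ as $k\to\infty$, and taking $j_n$ large enough beyond what was fixed in previous stages so that this diagonal threshold is cleared.

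Finally, the triangle inequality combined with these two bounds gives $\mu\{|\sup_{1\le j\le j_n}\varphi_j^{j_n}-\varphi|>1/n\}<1/2^n$ for every $n$. Since $\sum_n 1/2^n<\infty$, the Borel--Cantelli lemma yields a $\mu$-null set outside which $|\sup_{1\le j\le j_n}\varphi_j^{j_n}(t)-\varphi(t)|\le 1/n$ holds for all $n$ sufficiently large, giving the claimed $\mu$-almost everywhere convergence.
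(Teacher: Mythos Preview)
Your argument has a genuine gap, and in fact the gap cannot be closed because you are following a typo in the stated lemma. The conclusion should read $\lim_n \sup_{1\le j\le j_n}\varphi_j^{\,n}=\varphi$ (superscript $n$, not $j_n$); this is what the paper's proof establishes and what is used in the application to Lemma~\ref{l:aeconverge}. As literally written the assertion is false: on $\Omega=[0,1]$ with Lebesgue measure, set $\varphi_j^n=-1$ for $j\le n$ and $\varphi_j^n=0$ for $j>n$; then $\varphi^n\equiv 0$ for every $n$, yet for any choice of $(j_n)$ one has $\sup_{1\le j\le j_n}\varphi_j^{\,j_n}=-1$.

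The circularity you flag as ``the main obstacle'' is exactly this obstruction. You need $j_n$ to exceed the threshold $K(j_n,n)$ beyond which the partial suprema of the family $E_{j_n}$ approximate $\varphi^{j_n}$ in measure, but $K(m,n)$ can grow faster than $m$ (in the example above one may take $K(m,n)=m+1$). Your proposed ``inductive diagonal choice \ldots\ taking $j_n$ large enough beyond what was fixed in previous stages'' does not resolve this; the sentence stays vague precisely because no such choice exists in general.

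For the corrected statement the circularity disappears: the family index is $n$, fixed at stage $n$, and one only needs to choose the truncation level $j_n$ large. Your convergence-in-measure plus Borel--Cantelli scheme then works and is essentially the paper's argument. The paper uses Egorov's theorem (which, on a finite measure space, is interchangeable with your convergence-in-measure step) to find sets $A_n$ with $\mu(A_n^c)<2^{-n}$ on which $\sup_{1\le j\le m}\varphi_j^n\to\varphi^n$ uniformly, picks $j_n$ so that the partial supremum is within $1/n$ of $\varphi^n$ on $A_n$, and then combines this on $\liminf_n A_n$ with the a.e.\ hypothesis $\varphi^n\to\varphi$. One small simplification in the paper's route: the term $|\varphi^n-\varphi|$ is handled directly by the a.e.\ hypothesis rather than forced to be small in measure at every stage.
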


\begin{proof}
For fixed $n\in \mathbb N$,  we have that
$$
\lim_{m\rightarrow \infty} \sup_{1\leq j \leq m} \varphi_j^n=\varphi^n
$$
holds $\mu$-almost everywhere. Hence, by Egorov's Theorem, there is a set $A_n\in \Sigma$ such that $\mu(A_n^c)<2^{-n}$ and such that the sequence $(\sup_{1\leq j \leq m} \varphi_j^n(t))_m$ converges  to $\varphi^n(t)$ uniformly for $t\in A_n$. Then, there exists $j_n\in \mathbb N$ such that, for every $t\in A_n$,  and for every $m\geq j_n$, $$\left|\sup_{1\leq j \leq m} \varphi_j^n(t)- \varphi^n(t)\right|<\frac{1}{n}.$$

The set
$$
A=\bigcup_{n=1}^\infty \bigcap_{k=n}^\infty A_k
$$
satisfies that
$$
\mu(A^c)=\mu\left(\bigcap_{n=1}^\infty \bigcup_{k=n}^\infty A_k^c\right)=0.
$$

Let now $\psi^n=\sup_{1\leq j \leq j_n} \varphi_j^n$. Let
$$
B:=\{t\in \Omega : \varphi^n(t)\rightarrow \varphi(t)\}.
$$

Clearly $\mu((A\cap B)^c)=0$ and for $t\in A\cap B$ it holds that
$$
\lim_{n\rightarrow \infty} \psi^n(t)=\varphi(t).
$$
\end{proof}

The same proof shows a similar regularity property for $B(\Omega)$, the space of bounded measurable functions:

\begin{lema}\label{regularityB}
Let $\mu$ be a finite positive measure on a measurable space $(\Omega,\Sigma)$. Let $(E_n)_{n\in \mathbb N}$ be a sequence of countable subsets of $B(\Omega)$. That is, for each $n\in \mathbb N$ let $$
E_n:=\{ \varphi_j^n: j\in \mathbb N\},
$$
where $\varphi_j^n\in B(\Omega)$ for every $j\in \mathbb N$. Let
$$
\varphi^n:=\sup_{j\in \mathbb N} \varphi_j^n,
$$
and assume that the limit $$\lim_{n\rightarrow \infty} \varphi^n=\varphi$$ exists pointwise. Then, there exists a set $A\in\Sigma$ such that $\mu(A^c)=0$ and for every $n\in \mathbb N$ there exists $j_n\in \mathbb N$ such that, for $t\in A$
$$
\lim_{n\rightarrow \infty} \sup_{1\leq j \leq j_n} \varphi_j^{j_n}(t)=\varphi(t).
$$
\end{lema}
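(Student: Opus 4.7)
The plan is to copy the proof of Lemma \ref{regularityL1} almost verbatim, noting that the pointwise convergence hypothesis is actually slightly stronger than the $\mu$-a.e. convergence used there, so the argument becomes marginally simpler. The two ingredients are Egorov's theorem (available because $\mu$ is finite and each $\varphi_j^n$ is bounded measurable, so all countable suprema involved are measurable) and a Borel--Cantelli style summability trick.

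First, I would fix $n$ and observe that the sequence
$$
m\longmapsto \sup_{1\leq j\leq m}\varphi_j^n
$$
is an increasing sequence of bounded measurable functions converging pointwise to $\varphi^n$. Since $\mu(\Omega)<\infty$, Egorov's theorem yields a set $A_n\in\Sigma$ with $\mu(A_n^c)<2^{-n}$ on which the convergence is uniform. I would then pick $j_n\in\mathbb{N}$ large enough that
$$
\Big|\sup_{1\leq j\leq m}\varphi_j^n(t)-\varphi^n(t)\Big|<\frac{1}{n}
$$
for every $m\geq j_n$ and every $t\in A_n$. Setting $\psi^n:=\sup_{1\leq j\leq j_n}\varphi_j^n$, this is the uniform control on $A_n$ we need.

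Next, define
$$
A=\bigcup_{n=1}^\infty\bigcap_{k=n}^\infty A_k,
$$
so that
$$
\mu(A^c)=\mu\!\left(\bigcap_{n=1}^\infty\bigcup_{k=n}^\infty A_k^c\right)\leq \inf_{n}\sum_{k=n}^\infty 2^{-k}=0.
$$
For $t\in A$ there exists $N(t)$ with $t\in A_k$ for all $k\geq N(t)$, hence $|\psi^n(t)-\varphi^n(t)|<1/n$ for all $n\geq N(t)$. Combining this with the pointwise hypothesis $\varphi^n(t)\to\varphi(t)$ via the triangle inequality yields $\psi^n(t)\to\varphi(t)$ on $A$, which is the desired conclusion.

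I do not anticipate any genuine obstacle. The main difference with Lemma \ref{regularityL1} is that, because the convergence $\varphi^n\to\varphi$ is now assumed pointwise on all of $\Omega$, one does not need to further intersect $A$ with the $\mu$-a.e. convergence set $B$ appearing in the $L_1$ argument; this is precisely why the author remarks that ``the same proof'' works.
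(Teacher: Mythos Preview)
Your proposal is correct and follows exactly the approach the paper intends: the authors do not give a separate proof but simply remark that ``the same proof shows'' the $B(\Omega)$ version, and what you have written is precisely the proof of Lemma~\ref{regularityL1} transcribed to this setting. Your observation that the intersection with the set $B$ becomes unnecessary (since the convergence $\varphi^n\to\varphi$ is now assumed pointwise everywhere) is the only genuine difference, and it is exactly the simplification one expects.
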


The next result is based on \cite[Lemma 2.3]{DO3}.

\begin{lema}\label{l:aeconverge}
Let $\Phi:B(S^{n-1})_+\longrightarrow L_1(\mu) $ be a continuous orthogonally additive function. Let $(f_n)_{n\in \mathbb N}, (g_n)_{n\in \mathbb N}$ be two sequences in the unit ball of $B(S^{n-1})_+$ such that $||f_n-g_n\|_\infty\rightarrow 0$. Then $\Phi(f_n)(t)-\Phi(g_n)(t)\rightarrow 0$ in $\mu$-almost every $t$.
\end{lema}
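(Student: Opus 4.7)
The plan is to argue by contradiction, using Lemmas \ref{ortogonalidad} and \ref{regularityL1} to consolidate the hypothetical failure into a single family of ``bad'' pairs, and then derive a contradiction from Lemma \ref{pointwise continuity}.

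Suppose the conclusion fails. After passing to a subsequence and, if necessary, swapping $f_n$ with $g_n$, there exist $\varepsilon > 0$ and a Borel set $B \subseteq S^{n-1}$ with $\mu(B) > 0$ on which $\limsup_n(\Phi(f_n) - \Phi(g_n)) \geq \varepsilon$. Since $\varphi^n := \sup_{k \geq n}(\Phi(f_k) - \Phi(g_k))$ decreases $\mu$-a.e.\ to $\limsup_n(\Phi(f_n) - \Phi(g_n))$, Lemma \ref{regularityL1} furnishes indices $j_n$ such that the partial suprema $\sup_{n \leq k \leq j_n}(\Phi(f_k) - \Phi(g_k))$ converge $\mu$-a.e.\ to the same limit. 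Applying Lemma \ref{ortogonalidad}(1) to each finite family $(f_k, g_k)_{n \leq k \leq j_n}$, we obtain $F_n, G_n$ in the unit ball of $B(S^{n-1})_+$ with $\|F_n - G_n\|_\infty \leq \max_{n \leq k \leq j_n}\|f_k - g_k\|_\infty \to 0$ and $\Phi(F_n) - \Phi(G_n) \to H$ $\mu$-a.e.\ for some $H$ with $H \geq \varepsilon$ on $B$.

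By Egorov's theorem, we may shrink $B$ to $B' \subseteq B$ of positive measure on which the convergence is uniform, so that $\Phi(F_n) - \Phi(G_n) \geq \varepsilon/2$ on $B'$ for every $n \geq N_0$. For each $N \geq N_0$ and $m \geq N$, applying Lemma \ref{ortogonalidad}(2) to $(F_n, G_n)_{N \leq n \leq m}$ yields $f^{N,m} := \sup_{N \leq n \leq m} F_n$ together with $g^{N,m}$ in the unit ball of $B(S^{n-1})_+$ such that $\|f^{N,m} - g^{N,m}\|_\infty \leq \eta_N := \sup_{n \geq N}\|F_n - G_n\|_\infty$ and $\Phi(f^{N,m}) - \Phi(g^{N,m}) \geq \inf_{N \leq n \leq m}(\Phi(F_n) - \Phi(G_n)) \geq \varepsilon/2$ on $B'$ $\mu$-a.e. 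Crucially, $\eta_N \to 0$ as $N \to \infty$, while $f^{N,m}$ is monotone increasing in $m$.

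For fixed $N$, the monotone sequence $f^{N,m}$ increases pointwise to $f^{N,\infty} := \sup_{n \geq N} F_n$, which in turn decreases pointwise to $F^* := \limsup_n F_n$; by dominated convergence, $f^{N,\infty} \to F^*$ in $L_1(\mu)$. A diagonal choice of $m(N)$ guarantees $\|f^{N,m(N)} - f^{N,\infty}\|_{L_1(\mu)} < 1/N$, so $u_N := f^{N,m(N)} \to F^*$ in $L_1(\mu)$, and therefore $\mu$-a.e.\ along a subsequence. Since $\|g^{N,m(N)} - u_N\|_\infty \leq \eta_N \to 0$, the sequence $v_N := g^{N,m(N)}$ also converges $\mu$-a.e.\ to $F^*$ along the same subsequence. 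Lemma \ref{pointwise continuity} with $\lambda = 1$ then yields $\tilde V(u_N \chi_{B'}) \to \tilde V(F^* \chi_{B'})$ and $\tilde V(v_N \chi_{B'}) \to \tilde V(F^* \chi_{B'})$. On the other hand, the integral representation of $\tilde V$ combined with the orthogonal additivity of $\Phi$ gives
$$
\tilde V(u_N \chi_{B'}) - \tilde V(v_N \chi_{B'}) = \int_{B'}\bigl(\Phi(u_N) - \Phi(v_N)\bigr)\, d\mu \geq \frac{\varepsilon\,\mu(B')}{2} > 0,
$$
contradicting the previous limits. The main obstacle is the threat of circularity, since we seek a uniform-continuity-type conclusion for $\Phi$ while only pointwise continuity of $\tilde V$ is available; this is resolved by the monotone structure of the suprema supplied by Lemma \ref{ortogonalidad}(2), which delivers genuinely pointwise convergent diagonal sequences $u_N, v_N$ on which Lemma \ref{pointwise continuity} can be brought to bear.
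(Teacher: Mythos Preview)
Your proof is correct and follows essentially the same route as the paper's: argue by contradiction, use Lemma~\ref{regularityL1} together with Lemma~\ref{ortogonalidad}(1) to produce pairs $(F_n,G_n)$ whose $\Phi$-difference stays bounded below on a set of positive measure, then use Lemma~\ref{ortogonalidad}(2) to manufacture sequences that converge $\mu$-a.e.\ to a common limit, and finish with Lemma~\ref{pointwise continuity}. The only noteworthy variation is in the second stage: where the paper invokes Lemma~\ref{regularityB} to extract finite suprema of the $F_n$'s converging a.e.\ to $\limsup F_n$, you instead use dominated convergence in $L_1(\mu)$ and a diagonal choice of $m(N)$, then pass to a subsequence for a.e.\ convergence --- this bypasses Lemma~\ref{regularityB} entirely and is a mild streamlining, but the underlying mechanism is the same.
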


\begin{proof}
Let $(f_n)_{n\in \mathbb N}, (g_n)_{n\in \mathbb N}$ be two sequences as  in the hypothesis. If the result is not true, then there exists $\epsilon>0$ such that at least one of the sets $A^+$, $A^-$ has strictly positive measure, where
$$
A^+=\{t\in S^{n-1} : \limsup_{n\rightarrow \infty} \Phi(f_n)(t)- \Phi(g_n)(t)>\epsilon\},
$$
$$
A^-=\{t\in S^{n-1} : \limsup_{n\rightarrow \infty} \Phi(g_n)(t)- \Phi(f_n)(t)>\epsilon\}.
$$
We assume that $\mu(A^+)>0$, the other case being entirely similar.

We apply Lemma \ref{regularityL1} to the sets $E_n:=\{\Phi(f_k) - \Phi(g_k) :\, k\geq n\}$ and we obtain the existence of a sequence of natural numbers $(k_n)_{n\in \mathbb N}$ such that
$$
\limsup_{n\rightarrow \infty} \Phi(f_n)(t)- \Phi(g_n)(t)=\lim_{n\rightarrow \infty} \left(\sup_{n\leq k \leq k_n} \Phi(f_k)(t)- \Phi(g_k)(t)\right)
$$
for $\mu$-almost every $t$.

Let $A':=\{t\in S^{n-1} :\lim_{n\rightarrow \infty} \left(\sup_{n\leq k \leq k_n}  \Phi(f_k)(t)- \Phi(g_k)(t)\right)>\epsilon\}$. We have $\mu(A')=\mu(A^+)>0$.

Now, note that the set
$$
A'':=\bigcup_{m=1}^\infty \bigcap_{n=m}^\infty \left\{t\in S^{n-1}: \sup_{n\leq k \leq k_n}  \Phi(f_k)(t)- \Phi(g_k)(t)>\epsilon\right\}
$$
satisfies $A'\subset A''$.

Hence, there exists $m\in \mathbb N$ such that the set
$$
B=\bigcap_{n=m}^\infty \left\{t\in S^{n-1}: \sup_{n\leq k \leq k_n}  \Phi(f_k)(t)- \Phi(g_k)(t)>\epsilon\right\}
$$
has strictly positive measure. ``Shifting'' the sequences, we may assume that $m=1$.

Now, for every $n\in \mathbb N$ we apply the first part of Lemma \ref{ortogonalidad} to the sequences $(f_k)_{k=n}^{k_n}$, $(g_k)_{k=n}^{k_n}$ and we obtain functions $f^n, g^n$ in $B(S^{n-1})_+$ with $\|f^n\|_\infty,\|g^n\|_\infty\leq 1$, such that, for every $t\in S^{n-1}$
$$
|f^n(t)-g^n(t)|\leq \sup_{n\leq k \leq k_n}  |f_k(t)-g_k(t)|
$$
and
$$
\Phi(f^n)-\Phi(g^n)=   \sup_{n\leq k \leq k_n} \Phi(f_k)-\Phi(g_k).
$$

Therefore, the sequences $(f^n)_{n\in \mathbb N}, (g^n)_{n\in \mathbb N}$ satisfy that $\|f^n-g^n\|_\infty\rightarrow 0$ and $\Phi(f^n)(t)-\Phi(g^n)(t)>\epsilon$ for every $t\in B$.

We consider now the function $f:S^{n-1}\longrightarrow \mathbb R$ defined by
$$
f(t)=\limsup_{n\rightarrow \infty} f^n(t).
$$
It is easy to see that $f\in B(S^{n-1})$ and $\|f\|_\infty\leq 1.$ For every $n\in \mathbb N$, we apply now Lemma \ref{regularityB} to the sets $E_n=\{f^k:\, k\geq n\}$ and we obtain a sequence of natural numbers $(j_n)_{n\in \mathbb N}$ such that
$$
f(t)=\lim_{n\rightarrow \infty} \left(\sup_{n\leq k \leq j_n} f^k(t)\right)
$$
for $\mu$-almost every $t$.

We will use the notation $\sup_{n\leq k \leq k_n} f^k(t)=\tilde{f}_n(t)$.

For every fixed $n\in \mathbb N$, we apply now the second part of Lemma \ref{ortogonalidad} to the finite sequences $(f^k)_{k=n}^{j_n}$ and $(g^k)_{k=n}^{j_n}$, and we obtain a function $\tilde{g}_n\in B(S^{n-1})_+$ such that

\begin{itemize}
\item $\|\tilde{g}_n\|_\infty\leq 1$,
\item $|\tilde{g}_n(t)-\tilde{f}_n(t)|\leq \underset{n\leq k \leq j_n}{\sup} |f^k(t)-g^k(t)|$,
\item $\Phi(\tilde{f}_n)(t)-\Phi(\tilde{g}_n)(t)\geq \underset{n\leq k \leq j_n}{\inf} \Phi(f^k)(t)-\Phi(g^k)(t)$.

\end{itemize}

Therefore, $\|\tilde{f}_n-\tilde{g}_n\|_\infty\rightarrow 0$.

Since $\tilde{f}_n\rightarrow f$ in $\mu$ almost every $t$, we also have that $\tilde{f}_n\chi_{B}\rightarrow f\chi_{B}$, $\mu$ almost everywhere. By Lemma \ref{pointwise continuity}, this implies that
$$
\int_{B} \Phi(\tilde{f}_n)d\mu=\int_{S^{n-1}} \Phi(\tilde{f}_n\chi_{B})d\mu=V(\tilde{f}_n\chi_{B})\longrightarrow V(f\chi_{B}).
$$

On the other hand, $\tilde{g}_n\chi_{B}\rightarrow f\chi_{B}$ in $\mu$ almost every $t$ (because $\|\tilde{f}_n-\tilde{g}_n\|_\infty\rightarrow 0$). So, we also have
$$
\int_{B}  \Phi(\tilde{g}_n)d\mu=V(\tilde{g}_n\chi_{B})\longrightarrow V(f\chi_{B}).
$$
Therefore, it follows that
$$
\int_{B} \Phi(\tilde{f}_n)- \Phi(\tilde{g}_n))d\mu\longrightarrow 0.
$$
However, this is a contradiction with the fact that, for every $t\in B$, $$\Phi(\tilde{f}_n)(t)-\Phi(\tilde{g}_n)(t)>\epsilon.$$
\end{proof}

To finish our proof we will need some more tools from functional analysis:

Let us recall that a set $\mathcal F\subset L_1(\Omega,\Sigma,\mu)$ is uniformly integrable if for every $\varepsilon>0$ there is $\delta>0$ such that for every $B\in \Sigma$ with $\mu(B)<\delta$
$$
\sup_{f\in\mathcal F}\int_B |f| d\mu<\varepsilon.
$$

The following is folklore:

\begin{lema}\label{l:unif+ae}
Let $(f_n)_{n\in\mathbb N}\subset L_1(\Omega,\Sigma,\mu)$ with $\mu(\Omega)<\infty$. If $f_n\rightarrow 0$ $\mu$-almost everywhere, and the sequence $(f_n)_{n\in\mathbb N}$ is uniformly integrable, then $\|f_n\|_1\rightarrow0$.
\end{lema}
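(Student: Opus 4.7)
The plan is to execute Vitali's convergence theorem in its standard form, exploiting both hypotheses: uniform integrability handles the ``small measure set'' part and almost everywhere convergence, upgraded by Egorov, handles the ``large measure set'' part.

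First I would fix $\varepsilon>0$ and use the uniform integrability of $(f_n)_{n\in\mathbb{N}}$ to extract $\delta>0$ such that for every $B\in\Sigma$ with $\mu(B)<\delta$ one has $\sup_{n}\int_{B}|f_n|\,d\mu<\varepsilon/2$. Next, since $\mu(\Omega)<\infty$ and $f_n\to 0$ $\mu$-almost everywhere, Egorov's theorem provides a set $A\in\Sigma$ with $\mu(A^c)<\delta$ on which $f_n\to 0$ uniformly. In particular, there exists $n_0$ such that $|f_n(t)|<\varepsilon/(2\mu(\Omega)+1)$ for all $t\in A$ and $n\geq n_0$, which gives $\int_{A}|f_n|\,d\mu<\varepsilon/2$ for every $n\geq n_0$.

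Combining the two bounds, for every $n\geq n_0$ one has
\[
\|f_n\|_1=\int_A|f_n|\,d\mu+\int_{A^c}|f_n|\,d\mu<\frac{\varepsilon}{2}+\frac{\varepsilon}{2}=\varepsilon,
\]
by the choice of $A$ (noting $\mu(A^c)<\delta$ triggers the uniform integrability bound). Since $\varepsilon$ was arbitrary, $\|f_n\|_1\to 0$.

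There is no genuine obstacle here; the only subtlety worth flagging is the use of finiteness of $\mu$, which is needed both to apply Egorov's theorem and to turn uniform smallness of $|f_n|$ on $A$ into a small $L^1$-bound on that set. Both uses are essential: without $\mu(\Omega)<\infty$, uniform integrability alone does not prevent mass from escaping at infinity, and almost everywhere convergence would not upgrade to uniform convergence on a large set.
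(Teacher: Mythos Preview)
Your proof is correct and follows essentially the same approach as the paper: both use uniform integrability to control the integral over a small-measure set and Egorov's theorem (which requires $\mu(\Omega)<\infty$) to obtain uniform convergence on its complement, then combine the two estimates. The only differences are cosmetic (your ``good'' set $A$ plays the role of the paper's $B^c$, and you use $2\mu(\Omega)+1$ in the denominator rather than $2\mu(\Omega)$).
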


\begin{proof}
Given $\varepsilon>0$, take $\delta>0$ such that for every $A\in\Sigma$ with $\mu(A)<\delta$
$$
\sup_{n\in\mathbb N}\int_A |f_n| d\mu<\frac{\varepsilon}{2}.
$$
Now, using Egoroff's theorem, there exist $B\in \Sigma$ with $\mu(B)<\delta$ and such that $\|f_n \chi_{B^c}\|_\infty\rightarrow0$. Thus, we can take $N\in\mathbb N$ such that for $n\geq N$
$$
\|f_n \chi_{B^c}\|_\infty<\frac{\varepsilon}{2\mu(\Omega)}.
$$
It therefore follows that for $n\geq N$ we get
$$
\|f_n\|_1\leq \|f_n\chi_B\|_1 + \| f_n \chi_{B^c}\|_1 \leq \int_B |f_n| d\mu +  \mu(\Omega)\| f_n \chi_{B^c}\|_\infty<\varepsilon.
$$
\end{proof}

We need a decomposition result, which was first given for $L_p$ spaces (for $1\leq p<\infty$) by M. Kadec and A. Pelczynski in \cite{KP} (see also \cite[Theorem 29]{AO}).

\begin{lema}\label{l:ssp}
Let $(f_n)_{n\in\mathbb N}\subset L_1(\Omega,\Sigma,\mu)$ be such that $\sup_{n\in\mathbb N}\| f_n\|_{L_1}<\infty$. Then there is a subsequence $(f_{n_k})_{k\in\mathbb N}$ and a sequence of pairwise disjoint measurable sets $(A_k)_{k\in\mathbb N}$ such that the sequence $(f_{n_k}\chi_{A_k^c})_{k\in\mathbb N}$ is uniformly integrable.
\end{lema}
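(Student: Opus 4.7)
The plan is to follow a standard Kadec--Pelczynski dichotomy. Passing first to a subsequence via diagonalization over a countable dense set of thresholds $t$, I may assume that the tails $I_n(t):=\int_{\{|f_n|>t\}}|f_n|\,d\mu$ converge to a non-increasing limit $I(t)$ for each such $t$. Let $I_\infty:=\lim_{t\to\infty}I(t)$. The dichotomy is whether $I_\infty=0$ or $I_\infty>0$.

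If $I_\infty=0$, then combining a second diagonal extraction with the pointwise absolute continuity of each individual term, I obtain a subsequence $(f_{n_k})$ with $\sup_k I_{n_k}(t)\to 0$ as $t\to\infty$; this is precisely uniform integrability, so taking $A_k:=\emptyset$ does the job.

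In the interesting case $I_\infty>0$, I build the subsequence together with threshold levels $T_k\uparrow\infty$ and sets $B_k:=\{|f_{n_k}|>T_k\}$ inductively. At step $k+1$, the absolute continuity of the finitely many $|f_{n_1}|,\ldots,|f_{n_k}|$ furnishes $\delta_{k+1}<2^{-k-1}$ such that $\mu(E)<\delta_{k+1}$ implies $\int_E|f_{n_i}|\,d\mu<2^{-k-1}$ for every $i\leq k$. I then pick $T_{k+1}>M/\delta_{k+1}$ (where $M=\sup_n\|f_n\|_{L_1}$, so that every set of the form $\{|f_n|>T_{k+1}\}$ has measure below $\delta_{k+1}$ by Chebyshev) and choose $n_{k+1}$ large enough that $\int_{B_{k+1}}|f_{n_{k+1}}|\,d\mu>I_\infty/2$ and $|I_{n_{k+1}}(T_j)-I(T_j)|<2^{-k-1}$ for all $j\leq k+1$. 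Finally I disjointify by setting $A_k:=B_k\setminus\bigcup_{j>k}B_j$, which are pairwise disjoint and satisfy $\int_{B_k\setminus A_k}|f_{n_k}|\,d\mu\leq\sum_{j>k}\int_{B_j}|f_{n_k}|\,d\mu<2^{-k}$ thanks to the construction of the $\delta_j$.

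To verify that $(f_{n_k}\chi_{A_k^c})$ is uniformly integrable I split
\[
\int_{A_k^c\cap\{|f_{n_k}|>t\}}|f_{n_k}|\,d\mu=\bigl(I_{n_k}(t)-I_{n_k}(T_k)\bigr)+\int_{B_k\setminus A_k}|f_{n_k}|\,d\mu,
\]
valid for $t\leq T_k$. The first summand, evaluated at $t=T_j$ with $j\leq k$, is close to $I(T_j)-I_\infty$ for large $k$ and hence arbitrarily small for $j$ large; the second summand is bounded by $2^{-k}$. Together with the absolute continuity of finitely many initial terms this yields the desired uniform integrability. The main technical hurdle is exactly this interlocking inductive bookkeeping: the measures of future lump sets $B_j$ must be small enough to interact innocuously with the integrals of all past $f_{n_k}$, which forces the choice of $\delta_{k+1}$ to depend on the entire past of the construction.
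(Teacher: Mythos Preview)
The paper does not actually prove this lemma; it merely cites it as the Kadec--Pe{\l}czy\'nski decomposition, referring the reader to the original paper and to a handbook survey. Your proposal, by contrast, supplies a complete argument along the standard lines of the subsequence splitting lemma, and it is essentially correct.

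Two minor remarks. In the case $I_\infty=0$ no ``second diagonal extraction'' is actually needed: once $I_n(t)\to I(t)$ for every $t$ in your countable set and $I(t)\downarrow 0$, uniform integrability of the whole subsequence follows directly by fixing a single large threshold $t_0$ with $I(t_0)<\varepsilon/2$, using the convergence $I_n(t_0)\to I(t_0)$ for the tail of the sequence, and handling the finitely many initial terms by their individual absolute continuity. In the case $I_\infty>0$, the requirement $\int_{B_{k+1}}|f_{n_{k+1}}|\,d\mu>I_\infty/2$ plays no role in your verification of uniform integrability; it would be relevant if one also wanted the disjointly supported pieces $f_{n_k}\chi_{A_k}$ to be bounded below in norm (so as to form a basic sequence equivalent to the unit vector basis of $\ell_1$), but the lemma as stated here does not ask for that.
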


Now we are ready prove that continuous valuations are uniformly continuous on bounded sets. This will be a key step for the proof of  Theorem \ref{t:integral}
\begin{teo}\label{t:unif}
If $V:\mathcal S_b^n\rightarrow\mathbb R$ is a radial continuous valuation, then it is uniformly continuous on bounded sets. That is, for every $\lambda>0$ and every $\varepsilon>0$ there is $\delta>0$ such that whenever $f,g\in B(S^{n-1})_+$ with $\|f\|_\infty,\|g\|_\infty\leq \lambda$ and  $\|f-g\|_\infty\leq\delta$, we have
$$
|\tilde V (f)-\tilde V(g)|\leq\varepsilon.
$$
\end{teo}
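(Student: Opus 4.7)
The plan is to argue by contradiction, combining the orthogonal additivity machinery of Proposition \ref{p:fi}, the pointwise convergence Lemma \ref{l:aeconverge} for $\Phi(f_n)-\Phi(g_n)$, and the Kadec--Pe\l czy\'nski splitting (Lemma \ref{l:ssp}) to reduce non--uniform continuity to a violation of $L_1$--convergence of a uniformly integrable, a.e.\ null sequence.

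Suppose the conclusion fails: there exist $\lambda,\varepsilon>0$ and sequences $(f_n),(g_n)\subset B(S^{n-1})_+$ with $\|f_n\|_\infty,\|g_n\|_\infty\le\lambda$, $\|f_n-g_n\|_\infty\to 0$, yet $|\tilde V(f_n)-\tilde V(g_n)|\ge\varepsilon$. Writing $\tilde V$ via Proposition \ref{p:fi}, the goal is to show that $\int_{S^{n-1}}(\Phi(f_n)-\Phi(g_n))\,d\mu\to 0$. Lemma \ref{l:aeconverge} immediately gives $\Phi(f_n)(t)-\Phi(g_n)(t)\to 0$ for $\mu$-a.e.\ $t$, so it is enough to upgrade this a.e.\ convergence to $L_1$-convergence. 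For this I first need to know that the sequence $(\Phi(f_n)-\Phi(g_n))_n$ is bounded in $L_1(\mu)$; this follows because $\tilde V$ is bounded on $\{h\in B(S^{n-1})_+:\|h\|_\infty\le\lambda\}$ (a consequence of the definition (\ref{def:mu}) of $\mu_\lambda$), together with the identity $\|\Phi(f)\|_1=\tilde V(f\chi_{P})-\tilde V(f\chi_{P^c})$ for $P=\{\Phi(f)\ge 0\}$, which comes from the orthogonality property $\Phi(f\chi_A)=\Phi(f)\chi_A$.

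Once $L_1$-boundedness is in hand, Lemma \ref{l:ssp} yields a subsequence (which I relabel) and pairwise disjoint measurable sets $(A_k)$ such that $h_k:=(\Phi(f_k)-\Phi(g_k))\chi_{A_k^c}$ is uniformly integrable. Since the $A_k$ are disjoint, $\chi_{A_k^c}(t)\to 1$ for every $t$, so $h_k\to 0$ $\mu$-a.e.; Lemma \ref{l:unif+ae} then gives $\|h_k\|_1\to 0$. Splitting the integral,
$$\tilde V(f_k)-\tilde V(g_k)=\int_{S^{n-1}} h_k\,d\mu+\int_{A_k}(\Phi(f_k)-\Phi(g_k))\,d\mu,$$
and by orthogonal additivity the second term equals $\tilde V(f_k\chi_{A_k})-\tilde V(g_k\chi_{A_k})$. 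Disjointness of the $A_k$ again forces $f_k\chi_{A_k}\to 0$ and $g_k\chi_{A_k}\to 0$ pointwise, with sup-norms bounded by $\lambda$, so Lemma \ref{pointwise continuity} (applied with respect to $\mu_\lambda\ll\mu$) gives $\tilde V(f_k\chi_{A_k})\to 0$ and $\tilde V(g_k\chi_{A_k})\to 0$. Adding the two contributions yields $\tilde V(f_k)-\tilde V(g_k)\to 0$, contradicting the assumption and finishing the proof.

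The main obstacle I anticipate is step four: the orchestration between the $L_1$-splitting provided by Kadec--Pe\l czy\'nski and the a.e.\ machinery, making sure that the ``bad'' part $\int_{A_k}(\Phi(f_k)-\Phi(g_k))\,d\mu$ can be rewritten as a valuation difference amenable to Lemma \ref{pointwise continuity}. This is precisely where the orthogonal additivity $\Phi(f\chi_A)=\Phi(f)\chi_A$ and the continuity of $\tilde V$ under $\mu$-a.e.\ convergence of uniformly bounded sequences must be combined; the $L_1$-boundedness of $\Phi(f_n)-\Phi(g_n)$, which is not immediate from continuity of $\tilde V$, is the remaining subtle ingredient.
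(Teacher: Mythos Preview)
Your proof is correct and follows the same contradiction strategy as the paper: factor $\tilde V$ through $L_1(\mu)$ via $\Phi$, invoke Lemma~\ref{l:aeconverge} for a.e.\ convergence, apply the Kadec--Pe\l czy\'nski splitting of Lemma~\ref{l:ssp}, and handle the uniformly integrable complement via Lemma~\ref{l:unif+ae}. The only (minor) divergence is in the disjoint--support piece $\int_{A_k}(\Phi(f_k)-\Phi(g_k))\,d\mu$: you rewrite it as $\tilde V(f_k\chi_{A_k})-\tilde V(g_k\chi_{A_k})$ and kill it with Lemma~\ref{pointwise continuity} (since $f_k\chi_{A_k}\to 0$ pointwise), whereas the paper instead argues that $\sum_{k}\|\Phi(f_{n_k})\chi_{A_k}\|_1$ is bounded (because $\|\sum_k f_{n_k}\chi_{A_k}\|_\infty\le\lambda$) and hence the individual terms tend to zero; both routes work, and your justification of $L_1$--boundedness via $\|\Phi(f)\|_1=\tilde V(f\chi_P)-\tilde V(f\chi_{P^c})$ with $P=\{\Phi(f)\ge 0\}$ is in fact more careful than the paper's corresponding line.
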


\begin{proof}
Let $\tilde V:B(S^{n-1})_+\rightarrow \mathbb R$ be the valuation induced by $V$. If the result is not true, then there is $\lambda>0$, $\varepsilon>0$ and $(f_n)_{n\in\mathbb N},(g_n)_{n\in\mathbb N}\subset B(S^{n-1})_+$ such that
\begin{itemize}
\item $\|f_n\|_\infty,\|g_n\|_\infty\leq \lambda$ for every $n\in\mathbb N$,
\item $\|f_n-g_n\|_\infty\rightarrow0$,
\item $|\tilde V (f_n)-\tilde V(g_n)|>\varepsilon$ for every $n\in\mathbb N$.
\end{itemize}
For simplicity we will take $\lambda=1$.

Let $\Phi:B(S^{n-1})_+\rightarrow L_1(\mu)$ the mapping given in Proposition \ref{p:fi}. Since $\Phi(f\chi_A)=\Phi(f)\chi_A$ for every $f\in B(S^{n-1})_+$ and $A\in \Sigma_n$, in particular $\Phi$ is orthogonally additive.

Let $h_n=\Phi(f_n)-\Phi(g_n)$. Since $\tilde V$ is bounded on bounded sets \cite[Lemma 3.1]{TraVi} we have that
$$
\sup_{n\in\mathbb N} \|h_n\|_1\leq \sup_{n\in\mathbb N} \Big(\tilde V(f_n)+\tilde V(g_n)\Big)<\infty.
$$
Therefore, by Lemma \ref{l:ssp} there is a subsequence $(h_{n_k})_{k\in\mathbb N}$ and a pairwise disjoint sequence $(A_k)_{k\in\mathbb N}$ such that $(h_{n_k}\chi_{A_k^c})_{k\in\mathbb N}$ is uniformly integrable.

By Lemma \ref{l:aeconverge} we have that $h_n\rightarrow0$ $\mu$-almost everywhere. In particular, so does the sequence $(h_{n_k}\chi_{A_k^c})_{k\in\mathbb N}$. Hence, by Lemma \ref{l:unif+ae}, we have that
\begin{equation}\label{eq:unif}
\|h_{n_k}\chi_{A_k^c}\|_1\rightarrow0.
\end{equation}

On the other hand, since $(A_k)_{k\in\mathbb N}$ are pairwise disjoint, for every $m\in\mathbb N$ we have
$$
\sum_{k=1}^m \Phi(f_{n_k})\chi_{A_k}=\Phi\Big(\sum_{k=1}^m f_{n_k}\chi_{A_k}\Big).
$$
Now, as $\| \sum_{k=1}^m f_{n_k}\chi_{A_k}\|_\infty\leq1$, and $\tilde V$ is bounded on bounded sets \cite[Lemma 3.1]{TraVi} it follows that for some $K>0$ and every $m\in\mathbb N$
$$
\sum_{k=1}^m \|\Phi(f_{n_k})\chi_{A_k}\|_1=\Big\| \sum_{k=1}^m \Phi(f_{n_k})\chi_{A_k}\Big\|_1=\tilde V\Big(\sum_{k=1}^m f_{n_k}\chi_{A_k}\Big)\leq K.
$$
Hence, necessarily we have that
\begin{equation}\label{eq:fn}
\|\Phi(f_{n_k})\chi_{A_k}\|_1\rightarrow0.
\end{equation}

Similarly, we have that
\begin{equation}\label{eq:gn}
\|\Phi(g_{n_k})\chi_{A_k}\|_1\rightarrow0.
\end{equation}

Therefore, putting together \eqref{eq:unif}, \eqref{eq:fn} and \eqref{eq:gn} we get
$$
\|\Phi(f_{n_k})-\Phi(g_{n_k})\|_1\leq \|h_{n_k}\chi_{A_k^c}\|_1+\|\Phi(f_{n_k})\chi_{A_k}\|_1+\|\Phi(g_{n_k})\chi_{A_k}\|_1\rightarrow0.
$$
Hence,
$$
|\tilde V(f_{n_k})-\tilde V(g_{n_k})|=\Big|\int_{S^{n-1}}\Phi(f_{n_k})-\Phi(g_{n_k})d\mu\Big|\leq \|\Phi(f_{n_k})-\Phi(g_{n_k})\|_1\rightarrow0,
$$
which is a contradiction with the fact that $|\tilde V (f_n)-\tilde V(g_n)|>\varepsilon$ for every $n\in\mathbb N$, so the proof is finished.
\end{proof}

\section{Integral representation}

Once we have proved that continuous valuations are actually uniformly continuous on bounded sets, we can prove Theorem \ref{t:integral}.

\begin{proof} [Proof of Theorem \ref{t:integral}]
Suppose first $K:\mathbb R_+\times S^{n-1}\rightarrow\mathbb R$ satisfies (a) and (b) in the statement of the theorem, and we set
$$
V(L)=\int_{S^{n-1}} K(\rho_L(t),t)d\mu(t).
$$
It is clear that $V$ satisfies
$$
V(L_1\cup L_2)+V(L_1\cap L_2)=V(L_1)+V(L_2).
$$
Concerning continuity, let $(L_n)$ be a sequence of star bodies converging in the radial metric to $L$, that is if we denote $f_n=\rho_{L_n}$ and $f=\rho_L$, then $\|f_n-f\|_\infty \rightarrow0$. In particular, for every $t\in S^{n-1}$ it follows that $f_n(t)\rightarrow f(t)$, and by (a) we have $K(f_n(t),t)\rightarrow K(f(t),t)$ for $\mu$-almost every $t\in S^{n-1}$. Denoting $\lambda=\sup_n \|f_n\|_\infty$, by (b) we have that $K(f_n(t),t)\leq G_\lambda(t)$ for $\mu$-almost every $t\in S^{n-1}$. Hence, by the dominated convergence theorem it follows that
$$
V(L_n)=\int_{S^{n-1}}K(f_n(t),t) d\mu(t)\rightarrow \int_{S^{n-1}}K(f(t),t) d\mu(t)=V(L).
$$
Hence, $V$ defines a continuous valuation.

For the converse implication, given a radial continuous valuation $V:\mathcal S_0^n\longrightarrow\mathbb R$, we consider the induced mapping $\tilde V:B(S^{n-1})_+\rightarrow\mathbb R$ (see section \ref{s:preliminar}) and for $\lambda>0$, the Borel measures $\mu_\lambda$ given in \eqref{def:mu}. Let
$$
\mu=\sum_{\lambda=1}^\infty \frac{ \mu_\lambda}{2^{\lambda}\|\mu_\lambda\|}.
$$

Let $\Phi:B(S^{n-1})_+\rightarrow L_1(\mu)$ be the mapping given in Proposition \ref{p:fi}. Recall that for each $s\in \mathbb R_+$, $\Phi(s\chi_{S^{n-1}})$ is a Radon-Nikodym derivative of the measure $\nu_s$ with respect to $\mu$, where $\nu_s(A)=\tilde V(s\chi_A)$ for each $A\in \Sigma_n$. Let us define for $s\in\mathbb R_+$, $t\in S^{n-1}$, the function
$$
K_0(s,t)=\Phi(s\chi_{S^{n-1}})(t).
$$
Now, by Theorem \ref{t:unif}, we know that $\tilde V$ is uniformly continuous on bounded sets, and an argument like that of \cite[2.1.3.]{DO1} or \cite[Lemma 11]{CF} (see Lemma \ref{l:unifrational} in the Appendix for details) yields a Borel set $A_0\subset S^{n-1}$ with $\mu(A_0)=0$ such that for $t\notin A_0$, $K_0(\cdot,t)$ is uniformly continuous on every bounded set of rational numbers. Hence, we can define
$$
K(s,t)=\left\{
\begin{array}{ccl}
\underset{n}{\lim}\, K_0(s_n,t)  &   & \text{if } t\notin A_0, \text{ and } s_n\rightarrow s \text{ with }s_n\in \mathbb Q,  \\
  &   &   \\
0  &   & \text{if } t\in A_0,
\end{array}
\right.
$$
and exactly as in \cite[2.1.4.]{DO1}, it can be checked that $K$ satisfies the strong Carath\'eodory condition (thus, we get (a)), and $K(s,t)=K_0(s,t)$ for every $s\in\mathbb R$ and $\mu$-almost every $t\in S^{n-1}$.

Since for $\lambda\in \mathbb R_+$, it is clear that $\mu_\lambda$ is absolutely continuos with respect to $\mu$, we can take $G_\lambda\in L^1(\mu)$ to be its Radon-Nikodym derivative. We claim that $K(s,t)\leq G_\lambda(t)$ for $s<\lambda$ and $\mu$-almost every $t\in S^{n-1}$. Indeed, let $A\subset S^{n-1}$ be an arbitrary Borel set and $\varepsilon>0$. By definition of $\mu_\lambda$ (see \eqref{def:mu}) we can take an open set $G$ such that $A\subset G$ and
$$
\sup\{\tilde V(f):f\prec G,\,\|f\|_\infty\leq�\lambda\}\leq \mu_\lambda(A)+\varepsilon.
$$
Let $f_n\subset C(S^{n-1})_+$ such that $f_n\prec G$, $\|f_n\|_\infty\leq \lambda$ and $\|f_n- s\chi_A\|_\infty\rightarrow0$. Hence, we have
$$
\nu_s(A)=\tilde V(s\chi_A)=\lim_n \tilde V(f_n)\leq \mu_\lambda(A)+\varepsilon,
$$
and since $\varepsilon>0$ is arbitrary we get $\nu_s(A)\leq \mu_\lambda(A)$ for every $s<\lambda$ and $A\subset S^{n-1}$. Hence, by the Radon-Nikodym theorem it follows that for $s<\lambda$ and $\mu$-almost every $t\in S^{n-1}$
$$
K_0(s,t)\leq G_\lambda(t).
$$
Since $K(s,t)=K_0(s,t)$ for $\mu$-almost every $t\in S^{n-1}$, (b) follows.

Finally, for $s\in\mathbb Q_+$ and $A\in S^{n-1}$ we have
$$
\int_{S^{n-1}} K(s\chi_A(t),t)d\mu(t)=\int_A K_0(s,t)d\mu(t)=\tilde V(s\chi_A).
$$
Hence, equation \eqref{eq:integral} holds for star sets whose radial function is simple and with rational coefficients, that is $\rho_L=\sum_{k=1}^n q_k\chi_{A_k}$. Since these are dense in the star sets with bounded Borel radial function, by continuity of both sides of the equation, the conclusion follows.
\end{proof}

\section{Valuations on $\mathcal S_0^n$ and measures in $\mathbb R^n$}\label{S:measures}
Valuations are often presented as a ``generalization of the notion of measure''. In order to justify their study and to understand their applications, it is important to classify valuations on $\mathcal S_0^n$, distinguishing those which arise from a measure in $\mathbb R^n$ from those which are not.

In this section we find such a classification. Our main tool is the integral representation Theorem \ref{t:integral}, together with the notion of {\em variation} of a valuation, which we define next.

Given a (not necessarily continuous) valuation $V:C(S^{n-1})_+\longrightarrow \mathbb R$, for $f,g\in C(S^{n-1})_+$ with $f\leq g$, we define the {\em variation} of $V$ on the interval $[f,g]$ as
$$
|V|([f,g])=\sup\left\{ \sum_{k=1}^m |V(f_{k})-V(f_{k-1})|\right\},$$
where the supremum is taken over all finite sequences $(f_k)_{k=0}^m$ contained in $C(S^{n-1})_+$ such that $ f=f_0\leq f_1\leq \cdots \leq f_m=g.$

We say that $V$ has bounded variation if, for every $f,g\in C(S^{n-1})_+$ with $f\leq g$, it holds that $|V|([f,g])<\infty$.

It is easy to see that not every continuous valuation has bounded variation: Indeed, consider a function
$$
\theta:\mathbb R_+\longrightarrow \mathbb R_+
$$
such that $\theta(0)=0$ (this condition is not needed, we just impose it for clarity) and such that $\theta$ is continuous but does not have bounded variation (in the classical sense of variation of a function). Let $I=[0,a]$ be an interval where the variation of $\theta$ is not finite. That is
$$
\sup\left\{ \sum_{k=1}^m |\theta(x_{k+1})-\theta(x_k)|: 0\leq x_1\leq \cdots \leq x_m\leq a \right\}=+\infty.
$$
We consider the continous valuation $V:C(S^{n-1})_+\longrightarrow \mathbb R$ defined by
$$
V(f)=\int_{S^{n-1}} \theta(f(t)) dm(t),
$$
where $m$ is the normalized Lebesgue measure in $S^{n-1}$ (see \cite{Vi}). Then, we clearly have
$$
|V|([0,a\uno])\geq \sup\left\{ \sum_{k=1}^m |V(x_{k+1}\uno )-V(x_k\uno)|\right\}=\sup\left\{ \sum_{k=1}^m |\theta(x_{k+1})-\theta(x_k)|\right\},
$$
which is not upper bounded.

\smallskip

Given a valuation $V:C(S^{n-1})_+\longrightarrow \mathbb R$ with bounded variation, we can associate the variation function $|V|:C(S^{n-1})_+\longrightarrow \mathbb R_+$ given by
$$
|V|(f)=|V|([0,f]).
$$
It is clear that $|V|$ is increasing, in the sense that $|V|(f)\leq |V|(g)$ whenever $f\leq g$. We will see next that $|V|$ is actually also a  valuation on $C(S^{n-1})_+$. We need a preliminary lemma first.

\begin{lema}\label{suma intervalos}
Given $f,g,h\in C(S^{n-1})_+$ with $f\leq g \leq h$ we have that
$$
|V|([f,h])=|V|([f,g])+|V|([g,h]).
$$
\end{lema}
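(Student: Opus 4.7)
The statement is the standard additivity of total variation over adjacent intervals, and the plan is to establish the two inequalities separately, exactly as one does for real-valued functions of bounded variation, but with the lattice operations $\vee, \wedge$ playing the role of the partition refinement.

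For the inequality $|V|([f,g])+|V|([g,h])\leq |V|([f,h])$, I would simply concatenate partitions. Given finite chains $f=a_0\leq a_1\leq\cdots\leq a_p=g$ in $C(S^{n-1})_+$ and $g=b_0\leq b_1\leq\cdots\leq b_q=h$, the chain
\[
f=a_0\leq\cdots\leq a_p=g=b_0\leq\cdots\leq b_q=h
\]
lies in $[f,h]$, hence the sum of absolute increments of $V$ along it is bounded by $|V|([f,h])$. Taking the supremum independently over both partitions yields the desired inequality.

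For the reverse inequality, the main point is to use the valuation identity to ``split'' an arbitrary chain on $[f,h]$ through the intermediate function $g$. Given $f=f_0\leq f_1\leq\cdots\leq f_m=h$, consider the two chains
\[
f=f_0\wedge g\leq f_1\wedge g\leq\cdots\leq f_m\wedge g=g\quad\text{and}\quad g=f_0\vee g\leq\cdots\leq f_m\vee g=h,
\]
which lie in $[f,g]$ and $[g,h]$ respectively (using $f\leq g\leq h$ to identify the endpoints). The valuation identity applied to $f_i$ and $g$ gives
\[
V(f_i)-V(f_{i-1})=\bigl[V(f_i\vee g)-V(f_{i-1}\vee g)\bigr]+\bigl[V(f_i\wedge g)-V(f_{i-1}\wedge g)\bigr],
\]
hence by the triangle inequality and summation,
\[
\sum_{i=1}^m|V(f_i)-V(f_{i-1})|\leq |V|([g,h])+|V|([f,g]).
\]
Taking supremum over the chain on the left yields $|V|([f,h])\leq |V|([f,g])+|V|([g,h])$.

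I do not anticipate a serious obstacle here: the only mild point is checking the endpoint identifications $f_0\wedge g=f$, $f_m\wedge g=g$, $f_0\vee g=g$ and $f_m\vee g=h$, which follow immediately from $f\leq g\leq h$ and the pointwise nature of $\wedge,\vee$. Continuity of $V$ plays no role in this lemma; it is purely a consequence of the valuation identity and the order structure of $C(S^{n-1})_+$.
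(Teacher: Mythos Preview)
Your proposal is correct and follows essentially the same approach as the paper: concatenation of chains for one inequality, and the valuation identity $V(f_i)+V(g)=V(f_i\vee g)+V(f_i\wedge g)$ together with the triangle inequality to split an arbitrary chain through $g$ for the other. Your observation that continuity plays no role here is also accurate.
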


\begin{proof}
Let $\epsilon>0$ and take $(f_i)_{i=0}^n, \,(g_j)_{j=0}^m\subset C(S^{n-1})_+$ such that $f=f_0\leq f_1\leq \ldots\leq f_n=g$, $g=g_0\leq g_1\leq \ldots\leq g_m=h$ with
$$
|V|([f,g])\leq \sum_{i=1}^n|V(f_i)-V(f_{i-1})|+\frac\epsilon2,
$$
and
$$
|V|([g,h])\leq \sum_{j=1}^m|V(g_j)-V(g_{j-1})|+\frac\epsilon2.
$$
Considering the yuxtaposition of $(f_i)_{i=0}^n$ and $(g_j)_{j=0}^m$ it follows that
\begin{eqnarray*}
|V|([f,g])+|V|([g,h])&\leq &\sum_{i=1}^n|V(f_i)-V(f_{i-1})|+  \sum_{j=1}^m|V(g_j)-V(g_{j-1})|+\epsilon\\
&\leq& |V|([f,h])+\epsilon,
\end{eqnarray*}
and since $\epsilon>0$ is arbitrary, we get that $|V|([f,g])+|V|([g,h])\leq|V|([f,h])$.

For the converse inequality, we just need to observe that, for every finite sequence $(h_j)_{j=0}^m\subset C(S^{n-1})_+$ with  $f=h_0\leq h_1\leq \cdots \leq h_m=h$, one has
\begin{eqnarray*}
\sum_{i=1}^m |V(h_i)-V(h_{i-1})|&=&\sum_{i=1}^m |V(h_i)+V(g)-V(g)-V(h_{i-1})| \\
&= &\sum_{i=1}^m |V(h_i\vee g)+V(h_i\wedge g)-V(h_{i-1}\vee g)-V(h_{i-1}\wedge g)|\\
&\leq & \sum_{i=1}^m|V(h_i\wedge g)-V(h_{i-1}\wedge g)|+\sum_{i=1}^m|V(h_i\vee g)-V(h_{i-1}\vee g)|\\
&\leq& |V|([f,g])+|V|([g,h]),
\end{eqnarray*}
where the last inequality follows from the fact that
$f=h_0\wedge g \leq h_1\wedge g \leq \cdots \leq h_{m}\wedge g=g$ and $g=h_0\vee g \leq h_1\vee g \leq \cdots \leq h_{m}\vee g=h$.
\end{proof}

\begin{prop}\label{variation is valuation}
Let $V:C(S^{n-1})_+\longrightarrow \mathbb R$ be a  valuation with bounded variation $|V|$. Then $|V|: C(S^{n-1})_+\longrightarrow \mathbb R$ defined by
$$
|V|(f)=|V|([0,f]).
$$
is also a valuation.
\end{prop}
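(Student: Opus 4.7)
The plan is to verify the valuation identity
$$
|V|(f\vee g)+|V|(f\wedge g)=|V|(f)+|V|(g)
$$
for arbitrary $f,g\in C(S^{n-1})_+$, by reducing everything to the interval additivity of Lemma~\ref{suma intervalos}. Since $f\wedge g\le f,g\le f\vee g$, repeated application of that lemma gives
$$
|V|(f\vee g)=|V|(f\wedge g)+|V|([f\wedge g,g])+|V|([g,f\vee g]),
$$
$$
|V|(f)=|V|(f\wedge g)+|V|([f\wedge g,f]),\qquad |V|(g)=|V|(f\wedge g)+|V|([f\wedge g,g]).
$$
Substituting these, the desired identity collapses to the single claim
$$
|V|([g,f\vee g])=|V|([f\wedge g,f]). \qquad (\star)
$$

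To prove $(\star)$ I would exploit a natural lattice bijection between chains in the two intervals. Given any chain $g=h_0\le h_1\le\cdots\le h_m=f\vee g$, the functions $k_i:=h_i\wedge f$ form a chain $f\wedge g=k_0\le k_1\le\cdots\le k_m=f$. The key observation is that for every $h_i\in[g,f\vee g]$ one has $h_i\vee f=f\vee g$ (since $h_i\vee f\ge g\vee f$ and $h_i\vee f\le f\vee g$), so the valuation identity
$$
V(h_i)+V(f)=V(h_i\vee f)+V(h_i\wedge f)=V(f\vee g)+V(k_i)
$$
gives $V(h_i)-V(h_{i-1})=V(k_i)-V(k_{i-1})$. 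Summing absolute values yields $|V|([g,f\vee g])\le|V|([f\wedge g,f])$. The reverse inequality is obtained symmetrically: given a chain $f\wedge g=k_0\le\cdots\le k_m=f$, the chain $h_i:=k_i\vee g$ runs from $g$ to $f\vee g$, and since $k_i\wedge g=f\wedge g$ (as $f\wedge g\le k_i\wedge g\le f\wedge g$), the valuation identity again forces $V(k_i)-V(k_{i-1})=V(h_i)-V(h_{i-1})$. This establishes $(\star)$ and therefore the proposition.

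The main obstacle is precisely the identity $(\star)$: it says that the variation of $V$ over two parallel ``sides'' of the sublattice generated by $f$ and $g$ agrees, and it is this fact that reflects the valuation property at the level of the variation. Everything else reduces to Lemma~\ref{suma intervalos} and bookkeeping. A minor point worth noting is that the chains $(k_i)$ and $(h_i)$ produced in the two directions are automatically monotone because $\wedge f$ and $\vee g$ are monotone operations; thus no extra regularity is needed beyond the valuation axiom for $V$.
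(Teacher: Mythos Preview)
Your argument is correct. The reduction via Lemma~\ref{suma intervalos} to the single identity $(\star)$ is valid, and the two lattice bijections $h_i\mapsto h_i\wedge f$ and $k_i\mapsto k_i\vee g$ do exactly what you claim: the verifications $h_i\vee f=f\vee g$ for $h_i\in[g,f\vee g]$ and $k_i\wedge g=f\wedge g$ for $k_i\in[f\wedge g,f]$ are straightforward, and they turn the valuation identity into the term-by-term equality $V(h_i)-V(h_{i-1})=V(k_i)-V(k_{i-1})$.

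The paper proceeds differently. Instead of invoking Lemma~\ref{suma intervalos} first, it proves the two inequalities $|V|(f)+|V|(g)\le|V|(f\vee g)+|V|(f\wedge g)$ and its reverse directly, working with chains based at $0$. For the first, it takes chains for $[0,f]$ and $[0,g]$ and splits each increment $V(g_j)-V(g_{j-1})$ via the valuation identity into a $\vee f$-part and a $\wedge f$-part; concatenating the resulting chains yields the bound. For the reverse inequality it applies the valuation identity twice (once with $f$, once with $g$) to increments along a chain in $[0,f\vee g]$, producing chains ending at $f$ and at $g$. Your route is more conceptual: by isolating $(\star)$ you exhibit explicitly that the variation over the two ``parallel sides'' of the sublattice generated by $f,g$ coincide, and the chain bijections make this transparent. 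The paper's computation is more hands-on and avoids appealing to Lemma~\ref{suma intervalos}, at the cost of slightly heavier manipulations. Either argument is short; yours has the advantage of making the lattice-theoretic content of the statement visible.
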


\begin{proof}
Let $V$ be as in the hypothesis and $f,g\in C(S^{n-1})_+$. We choose finite sequences in $C(S^{n-1})_+$, with $0= f_0\leq f_1\leq \cdots \leq f_m=f$, $0=g_0\leq g_1\leq \cdots \leq g_l=g$.

Since $V$ is a valuation, we get

\begin{eqnarray*}
\sum_{j=1}^l \left|V(g_{j})-V(g_{j-1})\right| &=&\sum_{j=1}^l \left|V(g_{j})+V(f)-(V(g_{j-1})+V(f))\right| \\
&=&\sum_{j=1}^l \left|  V(g_{j}\vee f) + V(g_{j}\wedge f) -  V(g_{j-1}\vee f) - V(g_{j-1}\wedge f) \right|\\
&\leq&\sum_{j=1}^l \left|  V(g_{j}\vee f) -  V(g_{j-1}\vee f)\right| + \sum_{j=1}^l \left| V(g_{j}\wedge f) - V(g_{j-1}\wedge f) \right|
\end{eqnarray*}

Therefore, using the fact that $0= f_0\leq f_1\leq \cdots \leq f_m=f\leq f\vee g_1\leq \cdots \leq f\vee g_l=f\vee g$ and that $0\leq g_0\wedge f\leq \cdots \leq g_l\wedge f= g\wedge f$, we obtain that
$$
|V|(f)+|V|(g)\leq |V|(f\vee g)+ |V|(f\wedge g).
$$

For the converse inequality, let $0= \varphi_0\leq \varphi_1\leq \cdots \leq \varphi_m=f\vee g$, $0=\psi_0\leq\psi _1\leq \cdots \leq\psi_l=f\wedge g$ be  finite sequences in $C(S^{n-1})_+$. Note first that for each $1\leq j\leq m$, since $V$ is a valuation, we have
\begin{eqnarray*}
V(\varphi_{j})-V(\varphi_{j-1}) &=&V(\varphi_{j})+V(f)-V(\varphi_{j-1})-V(f) \\
&=&  V(\varphi_{j}\wedge f)  - V(\varphi_{j-1}\wedge f)+ V(\varphi_{j}\vee f) -  V(\varphi_{j-1}\vee f) \\
&=& V(\varphi_{j}\wedge f)  - V(\varphi_{j-1}\wedge f) + V(\varphi_{j}\vee f) +V(g)-  V(\varphi_{j-1}\vee f) -V(g)\\
&=& V(\varphi_{j}\wedge f)  - V(\varphi_{j-1}\wedge f)+ V((\varphi_{j}\vee f)\vee g)) -  V((\varphi_{j-1}\vee f)\vee g) \\
&&  +  V((\varphi_{j}\vee f)\wedge g)) - V((\varphi_{j-1}\vee f)\wedge g) \\
&=&V(\varphi_{j}\wedge f)  - V(\varphi_{j-1}\wedge f)+V((\varphi_{j}\vee f)\wedge g)) - V((\varphi_{j-1}\vee f)\wedge g)
\end{eqnarray*}
Hence, we have
\begin{eqnarray*}
\sum_{j=1}^m |V(\varphi_{j})-V(\varphi_{j-1})|+\sum_{i=1}^l |V(\psi_i)-V(\psi_{i-1})|\leq \sum_{j=1}^m |V(\varphi_{j}\wedge f)-V(\varphi_{j-1}\wedge f)|+\\
+ \sum_{i=1}^l |V(\psi_i)-V(\psi_{i-1})|+ \sum_{j=1}^m |V((\varphi_{j}\vee f)\wedge g)-V((\varphi_{j-1}\vee f)\wedge g)|.
\end{eqnarray*}
Since $0=\psi_0\leq\psi _1\leq \cdots \leq\psi_l=(\varphi_0\vee f)\wedge g\leq\cdots\leq (\varphi_m\wedge f)\vee g=g$ and $0= \varphi_0\wedge f \leq \varphi_1\wedge f\leq \cdots \leq \varphi_m\wedge f=f$, it follows that
$$
|V|(f\vee g)+|V|(f\wedge g)\leq |V|(f)+|V|( g).
$$
\end{proof}

In the next result we show that $|V|$ inherits the continuity of $V$.

\begin{lema}\label{variation continuous}
If $V$ is continuous and has bounded variation, then $|V|$ is also continuous.
\end{lema}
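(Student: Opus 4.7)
The plan is to reduce continuity of $|V|$ at $f\in C(S^{n-1})_+$ to two one-sided variation estimates, and to prove each by a partition-refinement argument combining the bounded variation of $V$ with Theorem \ref{t:unif}. For the reduction, given any sequence $f_n\to f$ in $\|\cdot\|_\infty$, I would set $g_n=f_n\vee f\geq f$ and $h_n=f_n\wedge f\leq f$; both converge to $f$ uniformly since $\|g_n-f\|_\infty,\|f-h_n\|_\infty\leq\|f_n-f\|_\infty$. By Proposition \ref{variation is valuation} the map $|V|$ is a valuation, and Lemma \ref{suma intervalos} then yields
$$|V|(f_n)-|V|(f)=|V|([f,g_n])-|V|([h_n,f]).$$
Hence it suffices to prove $|V|([f,g])\to 0$ whenever $g\to f$ uniformly with $g\geq f$; the case $h\leq f$ is entirely analogous, interchanging the roles of $\wedge$ and $\vee$.

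To establish this one-sided estimate, I would fix $M>\|f\|_\infty+1$ and $\varepsilon>0$. Since $|V|([0,M\uno])<\infty$ by bounded variation, one can choose a partition $0=\varphi_0\leq\cdots\leq\varphi_N=M\uno$ in $C(S^{n-1})_+$ with $\sum_{k=1}^N|V(\varphi_k)-V(\varphi_{k-1})|>|V|([0,M\uno])-\varepsilon$. Given any $g\in C(S^{n-1})_+$ with $f\leq g\leq M\uno$ and any partition $f=\psi_0\leq\cdots\leq\psi_L=g$ satisfying $\sum_l|V(\psi_l)-V(\psi_{l-1})|>|V|([f,g])-\varepsilon$, concatenate them into the increasing chain
$$0=\varphi_0\wedge f\leq\cdots\leq\varphi_N\wedge f=f=\psi_0\leq\cdots\leq\psi_L=g=\varphi_0\vee g\leq\cdots\leq\varphi_N\vee g=M\uno,$$
whose three segments give $|V|$-sums $A$, $B$, $C$ with $A+B+C\leq|V|([0,M\uno])$.

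The crux is a double comparison: the valuation identity $V(\varphi_k)-V(\varphi_{k-1})=(V(\varphi_k\wedge f)-V(\varphi_{k-1}\wedge f))+(V(\varphi_k\vee f)-V(\varphi_{k-1}\vee f))$ combined with the triangle inequality yields $A+C'\geq|V|([0,M\uno])-\varepsilon$, where $C':=\sum_k|V(\varphi_k\vee f)-V(\varphi_{k-1}\vee f)|$. The hard part is to replace $C'$ (involving $\vee f$) by $C$ (involving $\vee g$), and this is exactly where Theorem \ref{t:unif} is essential: since $|(\varphi_k\vee g)-(\varphi_k\vee f)|\leq|g-f|$ pointwise and all functions in sight have sup-norm at most $M$, uniform continuity on bounded sets provides, for each $\eta>0$, some $\delta>0$ such that $\|g-f\|_\infty<\delta$ forces $|V(\varphi_k\vee g)-V(\varphi_k\vee f)|<\eta$ for every $k$, whence $C'\leq C+2N\eta$. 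Combining, $A+C>|V|([0,M\uno])-\varepsilon-2N\eta$, so $B\leq\varepsilon+2N\eta$ and therefore $|V|([f,g])<B+\varepsilon<2\varepsilon+2N\eta$. Choosing $\eta$ with $2N\eta<\varepsilon$ (i.e.\ taking $\delta$ to be the uniform-continuity modulus associated to $\varepsilon/(2N)$) yields $|V|([f,g])<3\varepsilon$, completing the proof.
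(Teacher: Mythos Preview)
Your proof is correct and follows essentially the same approach as the paper's: reduce to one-sided estimates via the valuation property of $|V|$ (Proposition \ref{variation is valuation} and Lemma \ref{suma intervalos}), then fix a near-optimal partition in advance, apply the valuation identity $V(\varphi_k)-V(\varphi_{k-1})=(V(\varphi_k\wedge f)-V(\varphi_{k-1}\wedge f))+(V(\varphi_k\vee f)-V(\varphi_{k-1}\vee f))$, and use Theorem \ref{t:unif} to control the perturbation when $f$ is replaced by the nearby $g$. The only organisational difference is that the paper handles the two one-sided cases with separate partitions of $[0,f]$ and $[f,f+1]$, while you work with a single partition of $[0,M\uno]$ and the accounting $A+B+C\leq|V|([0,M\uno])$; this is a cosmetic variation of the same argument.
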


\begin{proof}
Let $f\in C(S^{n-1})_+$ and $\epsilon>0$. Let $(f_i)_{i=0}^m\subset C(S^{n-1})_+$ with $ 0=f_0\leq f_1\leq \cdots \leq f_m=f$ such that
$$
|V|(f)\leq \sum_{i=1}^m |V(f_i)-V(f_{i-1})|+\frac{\epsilon}{4},
$$
and let $(g_j)_{j=0}^n\subset C(S^{n-1})_+$ with  $f=g_0\leq g_1\leq \cdots \leq g_m=f+1$ such that
$$
|V|([f,f+1])\leq \sum_{j=1}^n |V(g_j)-V(g_{j-1})|+\frac{\epsilon}{4}.
$$
By Theorem \ref{t:unif}, there exists $0<\delta<1$ such that whenever $u,v\in[0,f+1]$ with $\|u-v\|_\infty<\delta$ then
$$
|V(u)-V(v)|<\frac{\epsilon}{4\max\{m,n\}}.
$$

Suppose first that $h\in C(S^{n-1})_+$ with $h\leq f$ and $\|f-h\|_\infty<\delta$. Note that for $1\leq i\leq m$ we have $\|f_i\vee h- f_{i-1}\vee h\|_\infty<\delta$, so $|V(f_i\vee h)-V(f_{i-1}\vee h)|<\varepsilon/4m$. Hence, it follows that
\begin{eqnarray*}
|V|(f)&\leq & \sum_{i=1}^m |V(f_i)-V(f_{i-1})|+\frac{\epsilon}{4}\\
&=& \sum_{i=1}^m |V(f_i)+V(h)-V(h)-V(f_{i-1})|+\frac{\epsilon}{4}\\
&=& \sum_{i=1}^m |V(f_i\vee h)+V(f_i\wedge h)-V(f_{i-1}\vee h)-V(f_{i-1}\wedge h)|+\frac{\epsilon}{4}\\
&\leq&\sum_{i=1}^m |V(f_i\wedge h)-V(f_{i-1}\wedge h)|+\sum_{i=1}^m |V(f_i\vee h)-V(f_{i-1}\vee h)|+\frac{\epsilon}{4}\\
&\leq&|V|(h)+\frac\epsilon2.
\end{eqnarray*}
Since $|V|(h)\leq |V|(f)$, we get that
$$
| |V|(f)-|V|(h)|\leq \frac\epsilon2
$$
whenever $\|f-h\|_\infty<\delta$ and $h\leq f$.

Now, suppose that $h\in C(S^{n-1})_+$ with $\|f-h\|_\infty<\delta$ and $f\leq h$. Note that for $1\leq j\leq n$ we have $f\leq g_j\wedge h\leq h$, so in particular $\|g_j\wedge h-g_{j-1}\wedge h\|_\infty<\delta$, and so $|V(g_j\wedge h)-V(g_{j-1}\wedge h)|<\epsilon/4n$. Moreover, by Lemma \ref{suma intervalos} it follows that
\begin{eqnarray*}
|V|(h)-|V|(f)&= & |V|([f,h])=|V|([f,f+1])-|V|([h,f+1])\\
&\leq& \sum_{j=1}^n |V(g_j)-V(g_{j-1})|+\frac{\epsilon}{4}-V([h,f+1])\\
&=& \sum_{j=1}^n |V(g_j\vee h)+V(g_j\wedge h)-V(g_{j-1}\vee h)-V(g_{j-1}\wedge h)|\\
&&+\frac{\epsilon}{4}-V([h,f+1])\\
&\leq& \sum_{j=1}^n |V(g_j\wedge h)-V(g_{j-1}\wedge h)|+\frac{\epsilon}{4}\\
&&+\sum_{j=1}^n|V(g_j\vee h)-V(g_{j-1}\vee h)|-V([h,f+1])\\
&<&\sum_{j=1}^n |V(g_j\wedge h)-V(g_{j-1}\wedge h)|+\frac{\epsilon}{4}\leq\frac\epsilon2.
\end{eqnarray*}
And since $|V|(h)\geq |V|(f)$, we also get that
$$
| |V|(f)-|V|(h)|\leq \frac\epsilon2
$$
whenever $\|f-h\|_\infty<\delta$ and  $f\leq h$.

Finally, for arbitrary $h\in C(S^{n-1})_+$ with $\|f-h\|_\infty<\delta$, by Proposition \ref{variation is valuation} we have
$$
|V|(f)-|V|(h)=|V|(f)-|V|(f\vee h)+|V|(f)-|V|(f\wedge h).
$$
Since $\|f-f\vee h\|_\infty<\delta$ and $\|f-f\wedge h\|_\infty<\delta$, by the above we get that
$$
||V|(f)-|V|(h)|\leq \epsilon.
$$
\end{proof}

Finally, we can prove our classification result Theorem \ref{t:variacion}.

\begin{proof}[Proof of Theorem \ref{t:variacion}]
$(1)\Rightarrow (2)$: Suppose there exists a (signed) countably additive measure $\nu$ on the Borel sets of $\mathbb R^n$ such that, for every $L\in \mathcal S_0^n$, $\nu(L)=V(\rho_L).$ Let $f\leq g\in C(S^{n-1})_+$. For $(f_i)_{i=0}^m\subset C(S^{n-1})_+$ such that $f=f_0\leq f_1\leq \cdots\leq f_m=g$ take $L_{f_i}\in\mathcal S_0^n$ with $\rho_{L_{f_i}}=f_i$. Let us consider the Jordan decomposition of the measure $nu$ as $\nu=\nu_+-\nu_-$ (cf. \cite[\S 29Theorem B]{Halmos}). For $1\leq i\leq m$ we have
\begin{eqnarray*}
|V(f_i)-V(f_{i-1})|&=&|\nu(L_{f_i})-\nu(L_{f_{i-1}})|\\
&=&|\nu_+(L_{f_i})-\nu_-(L_{f_i})-\nu_+(L_{f_{i-1}})-\nu_-(L_{f_{i-1}})|\\
&\leq&|\nu_+(L_{f_i})-\nu_+(L_{f_{i-1}})|+|\nu_-(L_{f_i})-\nu_-(L_{f_{i-1}})|\\
&=&\nu_+(L_{f_i})-\nu_+(L_{f_{i-1}})+\nu_-(L_{f_i})-\nu_-(L_{f_{i-1}}).
\end{eqnarray*}
Therefore, we get
$$
\sum_{i=1}^m|V(f_i)-V(f_{i-1})|\leq\nu_+(L_{g})-\nu_+(L_f)+\nu_-(L_g)-\nu_-(L_f),
$$
which yields that $|V|([f,g])<\infty$ as claimed.

\medskip

$(2)\Rightarrow(3)$: If $V$ has bounded variation, then we can write
$$
V=|V|-(|V|-V).
$$
By Proposition \ref{variation is valuation} and Lemma \ref{variation continuous} we have that $|V|$ is an increasing continuous valuation. Hence, it is enough to show that $|V|-V$ is also increasing. To this end, pick $f\leq g$ in $C(S^{n-1})_+$ and note that
$$
V(g)-V(f)\leq |V(g)-V(f)|\leq|V|([f,g])=|V|(g)-|V|(f).
$$
Therefore, it holds that $|V|(f)-V(f)\leq |V|(g)-V(g)$.

\medskip

$(3)\Rightarrow(4)$: Clearly, it is enough to show that if $V$ is monotone increasing, then for $\mu$-almost every $t\in S^{n-1}$, $K(\cdot, t)$ is increasing. This actually follows from the construction of $K$ given in the proof of Theorem \ref{t:integral}. Indeed, recall that we can define for every $s\in \mathbb R_+$ the function $K_0(s,\cdot)$ as the Radon-Nikodym derivative with respect to $\mu$ of the measure given by $\nu_s(A)=V(s\chi_A)$ for $A\in\Sigma_n$. Moreover, it is seen in the proof of Theorem \ref{t:integral} (see also the Appendix for more details) that $K_0(s,t)=K(s,t)$ for every $s\in\mathbb R_+$ and $\mu$-almost every $t\in S^{n-1}$. Now, if $V$ is monotone increasing, and $s_1,s_2\in \mathbb R_+$ are such that $s_1\leq s_2$, then for every $A\in \Sigma_n$ we have
$$
\nu_{s_1}(A)\leq \nu_{s_2}(A)
$$
which yields that for $\mu$-almost every $t\in S^{n-1}$
$$
K(s_1,t)=K_0(s_1,t)\leq K_0(s_2,t)=K(s_2,t).
$$
Thus, $K(\cdot,t)$ is increasing for $\mu$-almost every $t\in S^{n-1}$.

\medskip

$(4)\Rightarrow (1)$: We will see that if $K(\cdot,t)$ is continuous increasing for $\mu$-almost every $t\in S^{n-1}$, then there is a (positive) countably additive measure $\nu$ with $\nu(L)=V(\rho_L)$ for every star body $L\in \mathcal S_0^n$. Since every continuous function of bounded variation can be written as the difference of continuous increasing functions, the conclusion will follow.

Let us consider the semiring of subsets of $\mathbb R_+\times S^{n-1}$ given by
$$
\mathcal D=\{[a,b)\times A: a,b\in \mathbb R_+,\,a<b\,\textrm{and} \,A\in \Sigma_n\}
$$
and define $\nu:\mathcal D\rightarrow \mathbb R_+$ by
$$
\nu([a,b)\times A)=\int_A K(b,t)-K(a,t)d\mu(t).
$$

A standard argument shows that $\nu$ can be extended to a Borel measure on $\mathbb R^n$ (see Lemma \ref{l:extendnu} for details).

Finally, note that for every simple Borel star set $L\subset \mathbb R^n$ we have $\nu(L)=V(\rho_L)$. Indeed, let $(a_i)_{i=1}^m\subset\mathbb R_+$ and pairwise disjoint $(A_i)_{i=1}^m\subset\Sigma_n$ such that $\rho_L=\sum_{i=1}^m a_i\chi_{A_i}$. It follows that
\begin{eqnarray*}
\nu(L)&=&\nu\left(\bigcup_{i=1}^m [0,a_i)\times A_i\right)=\sum_{i=1}^m\nu\left([0,a_i)\times A_i\right)\\
&=&\sum_{i=1}^m\int_{A_i} K(a_i,t)d\mu(t)=\int_{S^{n-1}}K(\sum_{i=1}^m a_i\chi_{A_i}(t),t)d\mu(t)=V(\rho_L).
\end{eqnarray*}
Now, let $L\subset \mathbb R^n$ be a star body (with continuous radial function), and take an increasing sequence $(L_k)_{k\in\mathbb N}$ of simple Borel star sets such that $L=\bigcup_{k\in\mathbb N} L_k$. Therefore, it follows that
$$
\nu(L)=\lim_k \nu(L_k)=\lim_k V(\rho_{L_k})=V(\rho_L).
$$
\end{proof}

\begin{proof}[Proof of Corollary \ref{c:variacion}]
The proof follows exactly the same lines as the previous proof, with big simplifications due to rotational invariance. 

The implications (1) implies (2) and (2) implies (3) are exactly as in the previous proof, just noting the easy fact that the variation of a rotationally invariant valuation is also rotationally invariant. To show (3) implies (4), we use  \cite[Corollary 4.1]{TraVi}, and the definition of $\theta$ thereof, and we obtain two continuous monotonic increasing functions $\theta_1, \theta_2$ representing each of the monotonic increasing valuations in (3). Then $\theta=\theta_1-\theta_2$. 

Finally, to see that (4) implies (1), we just need to define $\nu$ on the intervals $[a,b]$ by $$\nu([a,b])=V(b\uno)-V(a\uno)$$
and check that is allows us to define, in a simpler way as the previous proof, a measure $\nu$ verifying (1). 
\end{proof}

\appendix

\section{}

\begin{lema}\label{l:unifrational}
Suppose $K_0:\mathbb R_+\times S^{n-1}\rightarrow \mathbb R$ is given by
$$
K_0(s,t)=\Phi(s\chi_{S^{n-1}})(t),
$$
where $\Phi:B(S^{n-1})_+\rightarrow L_1(\mu)$ is the mapping given in Proposition \ref{p:fi}, then there is a set $A_0\subset S^{n-1}$ with $\mu(A_0)=0$ such that for every $t\notin A_0$,
$K_0(\cdot,t)$ is uniformly continuous on every bounded set of rational numbers.
\end{lema}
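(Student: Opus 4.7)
The plan is to leverage the uniform $\|\cdot\|_\infty$-continuity of $\tilde V$ on bounded sets (Theorem \ref{t:unif}) to pass to uniform $L^1(\mu)$-continuity of the parametrized family $s\mapsto K_0(s,\cdot)$, and then, via a Chebyshev / Borel--Cantelli argument applied to finite rational nets at carefully chosen scales, to extract a single $\mu$-null set $A_0$ off which $K_0(\cdot,t)$ is uniformly continuous on every bounded set of rationals. The argument follows the pattern of \cite[2.1.3]{DO1} and \cite[Lemma~11]{CF}.

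\textbf{Step 1 ($L^1(\mu)$-uniform continuity in $s$).} Since $\tilde V(s\chi_A)=\int_A K_0(s,t)\,d\mu(t)$ for every $s\in\mathbb R_+$ and $A\in\Sigma_n$, and $\|s_1\chi_A - s_2\chi_A\|_\infty=|s_1-s_2|$, Theorem \ref{t:unif} provides, for each $N,n\in\mathbb N$, a $\delta_{N,n}>0$ such that for all $s_1,s_2\in[0,N]$ with $|s_1-s_2|<\delta_{N,n}$ and every Borel $A$,
\[
\left|\int_A (K_0(s_1,t)-K_0(s_2,t))\,d\mu(t)\right|=|\tilde V(s_1\chi_A)-\tilde V(s_2\chi_A)|<2^{-n}.
\]
Taking $A$ to be the support of the positive and the negative part of $K_0(s_1,\cdot)-K_0(s_2,\cdot)$ and adding yields $\|K_0(s_1,\cdot)-K_0(s_2,\cdot)\|_{L^1(\mu)}\leq 2\cdot 2^{-n}$.

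\textbf{Step 2 (Fix representatives and apply Borel--Cantelli to finite nets).} Enumerate $\mathbb Q\cap\mathbb R_+$ and fix a Borel representative of each $K_0(q,\cdot)\in L^1(\mu)$; the union over this countable set of the ambiguity null sets is absorbed into $A_0$. Fix $N\in\mathbb N$. For each $n$, let $F_{N,n}\subset\mathbb Q\cap[0,N]$ be a finite $\delta_{N,n}/3$-net, so $|F_{N,n}|\lesssim 1/\delta_{N,n}$. By Markov's inequality applied to the bound of Step 1, for each pair $(p,p')\in F_{N,n}\times F_{N,n}$ with $|p-p'|\leq\delta_{N,n}$,
\[
\mu\bigl(\{t:|K_0(p,t)-K_0(p',t)|>2^{-n/2}\}\bigr)\leq 2^{1-n/2}.
\]
Let $B_{N,n}$ denote the (finite) union of these sets. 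Choosing the scales $\delta_{N,n}$ small enough along a rapid subsequence so that the $L^1$-error in Step 1 dominates the factor $|F_{N,n}|^2$ (permissible since the target error $2^{-n}$ in Step~1 may be replaced by any smaller positive number by further shrinking $\delta_{N,n}$), we arrange $\sum_n\mu(B_{N,n})<\infty$. By Borel--Cantelli, $A_0^N:=\limsup_n B_{N,n}$ is $\mu$-null, and we set $A_0:=\bigcup_N A_0^N$.

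\textbf{Step 3 (Extract uniform continuity on rationals).} For every $t\notin A_0$ and each $N$, there is $n_0=n_0(t,N)$ such that for $n\geq n_0$ and every $(p,p')\in F_{N,n}^2$ with $|p-p'|\leq\delta_{N,n}$, one has $|K_0(p,t)-K_0(p',t)|\leq 2^{-n/2}$. Since $\bigcup_n F_{N,n}$ is dense in $\mathbb Q\cap[0,N]$ with arbitrarily fine spacing, this means $K_0(\cdot,t)|_{\bigcup_n F_{N,n}}$ extends uniquely to a continuous (hence uniformly continuous) function on the compact interval $[0,N]$; after redefining the representatives of $K_0(q,\cdot)$ for the remaining rationals $q\notin\bigcup_n F_{N,n}$ in accordance with this extension (again on a $\mu$-null set absorbed into $A_0$), we obtain uniform continuity of $K_0(\cdot,t)$ on $\mathbb Q\cap[0,N]$.

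The delicate point is the quantitative balancing in Step 2: the net size $|F_{N,n}|\sim 1/\delta_{N,n}$ grows as we shrink $\delta_{N,n}$, and summability demands that the $L^1$-modulus provided by Step~1 be tightened accordingly. The technique for carrying this out carefully, exploiting that the uniform continuity of $\tilde V$ supplies an error bound that can be pushed arbitrarily low at any single scale, is precisely what is done in \cite[2.1.3]{DO1} and \cite[Lemma~11]{CF}.
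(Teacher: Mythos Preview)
Your Step~2 contains a genuine circularity that cannot be repaired with the information extracted in Step~1. The net $F_{N,n}$ is a $\delta_{N,n}/3$-net, so its cardinality is of order $N/\delta_{N,n}$; in particular $|F_{N,n}|$ grows as $\delta_{N,n}$ shrinks. Your proposed fix---``shrink $\delta_{N,n}$ further so that the $L^1$-error beats $|F_{N,n}|^2$''---is therefore self-defeating: each time you shrink $\delta_{N,n}$ to improve the $L^1$-modulus, the net (which must have spacing $\le\delta_{N,n}$ for the bound from Step~1 to apply to neighbouring net points) becomes larger by the same mechanism. Uniform continuity of $\tilde V$ gives no quantitative rate relating $\delta$ to the $L^1$-modulus, so there is no way to win this race. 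If instead you decouple and fix a coarser net of spacing $h_n>\delta_{N,n}$, then no two distinct net points lie within $\delta_{N,n}$ of each other and Step~2 becomes vacuous.

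The paper avoids this entirely by exploiting a stronger consequence of Theorem~\ref{t:unif} than your Step~1 records. Rather than bounding $\|K_0(s_1,\cdot)-K_0(s_2,\cdot)\|_{L^1}$ for a \emph{single} pair $(s_1,s_2)$, the paper introduces the modulus
\[
\omega_\lambda(\delta)=\sup\Big\{\sum_{i=1}^m\omega_\lambda(\delta,A_i):\ (A_i)_{i=1}^m\text{ a finite Borel partition of }S^{n-1}\Big\},
\]
where $\omega_\lambda(\delta,A_i)=\sup\{\int_{A_i}|K_0(s,\cdot)-K_0(s',\cdot)|\,d\mu:\,s,s'\in[0,\lambda],\,|s-s'|\le\delta\}$, and shows (Lemma~\ref{l:omega}) that $\omega_\lambda(\delta)\to0$. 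The point is that on each piece $A_i$ one may choose a \emph{different} pair $(s_i,s_i')$, assemble these into two simple functions $f,g$ with $\|f-g\|_\infty\le\delta$, and apply uniform continuity of $\tilde V$ once to bound the whole sum---with no factor counting the number of pieces. Then, enumerating the rational pairs and partitioning the ``bad'' set $A(\delta,\varepsilon)$ according to which pair witnesses the oscillation, one gets $\varepsilon\,\mu(A(\delta,\varepsilon))\le\omega_k(\delta)$ directly, and Borel--Cantelli finishes. Your Step~1, which only tests $\tilde V$ on pairs of the form $(s_1\chi_A,s_2\chi_A)$, cannot produce this partition-uniform bound; you need to test on general simple functions.
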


We will follow the same approach as in \cite[2.1.3.]{DO1} or \cite[Lemma 11]{CF}. Before the proof, recall that for each $s\in \mathbb R_+$, $\Phi(s\chi_{S^{n-1}})\in L_1(\mu)$ is the Radon-Nikodym derivative of the measure $\nu_s$ with respect to $\mu$, where $\nu_s(A)=\tilde V(s\chi_A)$ for $A\in \Sigma_n$.

For $\delta>0$, $\lambda>0$, and $A\in \Sigma_n$, let
$$
\omega_{\lambda}(\delta,A)=\sup\Big\{\int_A|K_0(s,t)-K_0(s',t)|d\mu(t):\,s,s'\in[0,\lambda],\,|s-s'|\leq\delta\Big\},
$$
and let
$$
\omega_\lambda(\delta)=\sup\Big\{\sum_{i=1}^m\omega_\lambda(\delta,A_i):\,\bigcup_{i=1}^m A_i=S^{n-1},\, A_i\cap A_j=\emptyset,\textrm{ for }i\neq j\Big\}.
$$

\begin{lema}\label{l:omega}
For every $\lambda>0$, we have that
$$
\lim_{\delta\rightarrow 0} \omega_\lambda(\delta)=0.
$$
\end{lema}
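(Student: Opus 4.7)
The plan is to realize, for any partition $(A_i)_{i=1}^m$ of $S^{n-1}$ and any near-optimal choice of pairs $(s_i, s_i')$, the corresponding partial sum $\sum_i \int_{A_i}|K_0(s_i,t)-K_0(s_i',t)|\,d\mu$ as a single difference $\tilde V(f)-\tilde V(g)$ for two simple functions $f,g\in B(S^{n-1})_+$ with $\|f\|_\infty,\|g\|_\infty\leq\lambda$ and $\|f-g\|_\infty\leq \delta$, and then invoke Theorem \ref{t:unif} to control the whole thing uniformly in the partition.

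Concretely, fix $\varepsilon>0$ and use Theorem \ref{t:unif} to pick $\delta_0>0$ such that $|\tilde V(u)-\tilde V(v)|<\varepsilon$ whenever $\|u\|_\infty,\|v\|_\infty\leq\lambda$ and $\|u-v\|_\infty\leq\delta_0$. For $\delta\leq\delta_0$ and an arbitrary partition $(A_i)_{i=1}^m$ of $S^{n-1}$ into Borel sets, I will first choose, for each $i$, values $s_i,s_i'\in[0,\lambda]$ with $|s_i-s_i'|\leq\delta$ such that
$$\int_{A_i}|K_0(s_i,t)-K_0(s_i',t)|\,d\mu(t)\geq \omega_\lambda(\delta,A_i)-\frac{\varepsilon}{m}.$$
Next, I split each $A_i$ into the Borel pieces $A_i^+=\{t\in A_i:K_0(s_i,t)\geq K_0(s_i',t)\}$ and $A_i^-=A_i\setminus A_i^+$, and define
$$f=\sum_{i=1}^m\bigl(s_i\chi_{A_i^+}+s_i'\chi_{A_i^-}\bigr),\qquad g=\sum_{i=1}^m\bigl(s_i'\chi_{A_i^+}+s_i\chi_{A_i^-}\bigr).$$
Clearly $\|f\|_\infty,\|g\|_\infty\leq\lambda$ and $\|f-g\|_\infty\leq\delta$.

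The heart of the argument is the identity $\tilde V(f)-\tilde V(g)=\sum_{i=1}^m\int_{A_i}|K_0(s_i,t)-K_0(s_i',t)|\,d\mu$. This follows from the orthogonal additivity $\Phi(h\chi_A)=\Phi(h)\chi_A$ of Proposition \ref{p:fi}: for any simple function $\sum_i r_i\chi_{B_i}$ with the $B_i$ disjoint, $\Phi(\sum_i r_i\chi_{B_i})\chi_{B_j}=\Phi(r_j\chi_{B_j})=\Phi(r_j\chi_{S^{n-1}})\chi_{B_j}=K_0(r_j,\cdot)\chi_{B_j}$, so $\Phi(f)-\Phi(g)=\sum_i\bigl((K_0(s_i,\cdot)-K_0(s_i',\cdot))\chi_{A_i^+}+(K_0(s_i',\cdot)-K_0(s_i,\cdot))\chi_{A_i^-}\bigr)$, which is exactly $\sum_i|K_0(s_i,\cdot)-K_0(s_i',\cdot)|\chi_{A_i}\geq 0$. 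Integrating against $\mu$ gives the identity.

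Combining everything, for $\delta\leq\delta_0$,
$$\sum_{i=1}^m\omega_\lambda(\delta,A_i)\leq \sum_{i=1}^m\int_{A_i}|K_0(s_i,t)-K_0(s_i',t)|\,d\mu+\varepsilon=\tilde V(f)-\tilde V(g)+\varepsilon\leq 2\varepsilon.$$
Taking supremum over partitions yields $\omega_\lambda(\delta)\leq 2\varepsilon$ for all $\delta\leq\delta_0$, as desired. The only potentially delicate point is the orthogonal-additivity computation for $\Phi$ on these two simple functions, which hinges on disjointness of the $A_i^\pm$ across all indices; this is precisely why the sign-based refinement of each $A_i$ must be carried out simultaneously for the two functions $f$ and $g$. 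Apart from that, the argument is a direct consequence of Theorem \ref{t:unif} and Proposition \ref{p:fi}.
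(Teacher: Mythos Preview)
Your proof is correct and follows essentially the same approach as the paper's: both arguments split each $A_i$ into the sign sets $A_i^{\pm}$, assemble the simple functions $f$ and $g$ exactly as you do, and reduce the sum $\sum_i\int_{A_i}|K_0(s_i,t)-K_0(s_i',t)|\,d\mu$ to a single difference $\tilde V(f)-\tilde V(g)$ controlled by Theorem~\ref{t:unif}. The only cosmetic difference is that the paper first chooses a near-optimal partition and then bounds it, whereas you bound an arbitrary partition and take the supremum afterwards; your justification of the key identity via $\Phi(h\chi_A)=\Phi(h)\chi_A$ is also spelled out a bit more explicitly than in the paper.
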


\begin{proof}
Given $\varepsilon>0$, by Theorem \ref{t:unif}, we know that $\tilde V$ is uniformly continuous on bounded sets, so there is $\delta>0$ such that $|\tilde V(f)-\tilde V(g)|\leq \frac{\varepsilon}{3}$ whenever $f,g\in B(S^{n-1})$ satisfy $\|f\|_\infty,\|g\|_\infty\leq \lambda$ and $\|f-g\|_\infty<\delta$.

Let $(A_i)_{i=1}^m\subset \Sigma_n$ pairwise disjoint with $\bigcup_{i=1}^m A_i=S^{n-1}$ such that
$$
\omega_\lambda(\delta)\leq \sum_{i=1}^m\omega_\lambda(\delta,A_i)+\frac{\varepsilon}{3}
$$
For $1\leq i\leq m$, let $s_i, s'_i\in[0,\lambda]$ with $|s_i-s'_i|\leq\delta$, such that
$$
\omega_{\lambda}(\delta, A_i)\leq \int_{A_i}|K_0(s_i,t)-K_0(s'_i,t)|d\mu(t)+\frac{\varepsilon}{3m}.
$$
Let $A_i^+=\{t\in A_i: K_0(s_i,t)\geq K_0(s'_i,t)\}$ and $A_i^-=\{t\in A_i: K_0(s_i,t)< K_0(s'_i,t)\}$, which belong clearly to $\Sigma_n$.

Now, if we set
$$
f=\sum_{i=1}^m s_i \chi_{A_i^+}+s'_i\chi_{A_i^-},\quad\textrm{and }\quad g=\sum_{i=1}^m s'_i \chi_{A_i^+}+s_i\chi_{A_i^-},
$$
then we clearly have $\|f\|_\infty,\|g\|_\infty\leq \lambda$ and $\|f-g\|_\infty\leq\delta$. Hence, $|\tilde V(f)-\tilde V(g)|\leq \frac{\varepsilon}{3}$, which yields
\begin{eqnarray*}
\omega_\lambda(\delta)&\leq &\sum_{i=1}^m  \int_{A_i}|K_0(s_i,t)-K_0(s'_i,t)|d\mu(t)+\frac{2\varepsilon}{3}\\
&=&\sum_{i=1}^m  \int_{A_i^+} K_0(s_i,t)-K_0(s'_i,t)d\mu(t)+  \int_{A_i^-}K_0(s'_i,t)-K_0(s_i,t)d\mu(t)+\frac{2\varepsilon}{3}\\
&=&\sum_{i=1}^m \tilde V(s_i\chi_{A_i^+}) -\tilde V(s'_i\chi_{A_i^+})+\tilde V(s'_i\chi_{A_i^-})-\tilde V(s_i\chi_{A_i^-})+\frac{2\varepsilon}{3}\\
&=&\tilde V(f)-\tilde V(g)+\frac{2\varepsilon}{3}\leq \varepsilon.
\end{eqnarray*}
\end{proof}

\begin{proof}[Proof of Lemma \ref{l:unifrational}]
For $k\in \mathbb N$, let $S_k=\mathbb Q\cap[0,k]$. Given $\delta>0$, and $\varepsilon>0$, set
$$
A(\delta,\varepsilon)=\Big\{t\in S^{n-1}:\sup\{|K_0(s,t)-K_0(s',t)|:s,s'\in S_k,\,|s-s'|\leq \delta\}>\varepsilon.
$$
Also, given $s,s'\in S_k$ with $|s-s'|\leq \delta$, set
$$
B(s,s',\delta,\varepsilon)=\{t\in S^{n-1}:|K_0(s,t)-K_0(s',t)|>\varepsilon\}.
$$
Let $(s_i,s'_i)_{i\in\mathbb N}$ be an enumeration of all pairs $(s,s')$ where $s,s'\in S_k$ and $|s-s'|\leq \delta$. Let $A_1(\delta,\varepsilon)=B(s_1,s'_1,\delta,\varepsilon)$ and
$$
A_i(\delta,\varepsilon)=B(s_i,s'_i,\delta,\varepsilon)\backslash\bigcup_{j=1}^{i-1} A_j(\delta,\varepsilon).
$$
In this way, we obtain a sequence of pairwise disjoint sets such that $\bigcup_{i=1}^\infty A_i(\delta,\varepsilon)=A(\delta,\varepsilon)$.

Now, it follows that
$$
\varepsilon \mu\Big(A(\delta,\varepsilon)\Big)\leq\sum_{i=1}^\infty\int_{A_i(\delta,\varepsilon)}|K_0(s_i,t)-K_0(s'_i,t)|d\mu(t)\leq\omega_k(\delta).
$$
Therefore, by Lemma \ref{l:omega}, for every $\varepsilon>0$ we get that
$$
\lim_{\delta\rightarrow 0} \mu\Big(A(\delta,\varepsilon)\Big)=0.
$$

Now, for each $\varepsilon>0$, pick a sequence $\delta_m\rightarrow 0$ such that
$$
\sum_{m=1}^\infty \mu\Big(A(\delta_m,\varepsilon)\Big)<\infty,
$$
and set
$$
A(\varepsilon)=\bigcap_{k=1}^\infty\bigcup_{m=k}^\infty A(\delta_m,\varepsilon).
$$
It is clear that $\mu\Big(A(\varepsilon)\Big)=0$. Now, take $\varepsilon_j\rightarrow 0$ and set $A^k=\bigcup_{j=1}^\infty A(\varepsilon_j)$, which also satisfies $\mu(A^k)=0$. It is easy to check that for every $t\in S^{n-1}\backslash A^k$, $K_0(\cdot, t)$ is uniformly continuous on $S_k$.

Finally, set $A_0=\bigcup_{k=1}^\infty A^k$, which is the required set with $\mu(A_0)=0$ and such that for every $t\notin A_0$
$K_0(\cdot,t)$ is uniformly continuous on every bounded set of rational numbers.

\end{proof}

\begin{lema}\label{l:extendnu}
Let $K:\mathbb R_+\times S^{n-1}\rightarrow \mathbb R$ such that $K(s,\cdot)$ is measurable for every $s\in\mathbb R_+$ and $K(\cdot,t)$ is continuous increasing for $\mu$-almost every $t\in S^{n-1}$. Let $\mathcal D$ be the semiring of subsets of $\mathbb R_+\times S^{n-1}$ given by
$$
\mathcal D=\{[a,b)\times A: a,b\in \mathbb R_+,\,a<b\,\textrm{and} \,A\in \Sigma_n\}
$$
and define $\nu:\mathcal D\rightarrow \mathbb R_+$ by
$$
\nu([a,b)\times A)=\int_A K(b,t)-K(a,t)d\mu(t).
$$
Then $\nu$ can be extended to a Borel measure on $\mathbb R^n$.
\end{lema}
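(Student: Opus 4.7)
The plan is to apply the Carath\'eodory--Hahn extension theorem to $\nu$ as a premeasure on the semiring $\mathcal D$. Once we verify that $\nu$ is well defined, non-negative, and $\sigma$-additive on $\mathcal D$, the theorem produces a unique extension to $\sigma(\mathcal D)$, which coincides with the Borel $\sigma$-algebra of $\mathbb R_+\times S^{n-1}$; composing with the polar identification between $(0,\infty)\times S^{n-1}$ and $\mathbb R^n\setminus\{0\}$, and setting $\nu(\{0\})=0$, yields the desired Borel measure on $\mathbb R^n$.

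The cleanest way to verify additivity is via a pointwise decomposition. For each $t\in S^{n-1}$ outside the $\mu$-null set on which $K(\cdot,t)$ fails to be continuous and increasing, the classical Lebesgue--Stieltjes construction produces a Borel measure $\lambda_t$ on $\mathbb R_+$ satisfying $\lambda_t([a,b))=K(b,t)-K(a,t)$. Writing $E_t=\{s\in\mathbb R_+:(s,t)\in E\}$ for the vertical section, one has
\begin{equation*}
\nu([a,b)\times A)=\int_{S^{n-1}}\lambda_t\big(([a,b)\times A)_t\big)\, d\mu(t),
\end{equation*}
and the integrand is $\mu$-measurable because $K(s,\cdot)$ is measurable for every $s\in\mathbb R_+$.

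To prove $\sigma$-additivity, suppose $[a,b)\times A=\bigsqcup_{n=1}^\infty [a_n,b_n)\times A_n$. For each admissible $t$, the section on the left decomposes as the disjoint union of the sections $([a_n,b_n)\times A_n)_t$ in $\mathbb R_+$, so the $\sigma$-additivity of $\lambda_t$ gives
\begin{equation*}
\lambda_t\big(([a,b)\times A)_t\big)=\sum_{n=1}^\infty \lambda_t\big(([a_n,b_n)\times A_n)_t\big)
\end{equation*}
for $\mu$-almost every $t$. Since every summand is non-negative (this is where the monotonicity of $K(\cdot,t)$ is essential), the monotone convergence theorem, applied to the partial sums, interchanges the series with the integral against $\mu$, yielding $\nu([a,b)\times A)=\sum_n \nu([a_n,b_n)\times A_n)$.

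Finite additivity on $\mathcal D$ is a special case of the same identity, and $\sigma$-finiteness of $\nu$ follows from the exhaustion $\mathbb R_+\times S^{n-1}=\bigcup_{k\in\mathbb N} [0,k)\times S^{n-1}$ together with condition (b) of Theorem~\ref{t:integral}, which guarantees $\nu([0,k)\times S^{n-1})\leq \int G_k\, d\mu<\infty$. The only mildly delicate point is the sectioning argument combined with the monotone convergence step; the rest is routine bookkeeping around the Carath\'eodory extension machinery.
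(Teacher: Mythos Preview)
Your argument is correct and takes a genuinely different route from the paper. The paper proves countable additivity of $\nu$ on $\mathcal D$ directly by a compactness argument, in the spirit of the classical construction of Lebesgue measure: one inequality comes from finite additivity, and for the other one approximates $A$ from inside by a compact set, each $A_n$ from outside by an open set, uses continuity of $K(\cdot,t)$ to enlarge the intervals slightly, extracts a finite subcover, and pieces together the $\varepsilon$-estimates. Your approach instead factors the problem through the one-dimensional Lebesgue--Stieltjes measures $\lambda_t$ associated to the continuous increasing functions $K(\cdot,t)$: countable additivity in the $s$-variable is then borrowed from $\lambda_t$, and the interchange of $\sum$ and $\int d\mu$ is handled by monotone convergence, using non-negativity of each term. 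This is shorter and more conceptual, at the cost of relying on the Lebesgue--Stieltjes construction as a black box; the paper's argument is more self-contained.

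One small point: your justification of $\sigma$-finiteness appeals to condition~(b) of Theorem~\ref{t:integral}, but that condition is not among the hypotheses of the lemma. This is harmless, however: the lemma already assumes $\nu$ takes values in $\mathbb R_+$, so each $\nu([0,k)\times S^{n-1})$ is finite and $\sigma$-finiteness is immediate. In any case, $\sigma$-finiteness is only needed for uniqueness of the Carath\'eodory extension, not for its existence, and the lemma only asserts existence.
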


\begin{proof}
We follow a similar approach as in the construction of Lebesgue measure.

Clearly, $\nu$ is finitely additive on $\mathcal D$, in the sense that for any disjoint family $([a_i,b_i)\times A_i)_{i=1}^m$ such that $\bigcup_{i=1}^m [a_i,b_i)\times A_i=[a,b)\times A$ with $[a,b)\times A\in \mathcal D$ it follows that
$$
\nu([a,b)\times A)=\sum_{i=1}^m \nu([a_i,b_i)\times A_i).
$$

We claim that $\nu$ is actually countably additive on $\mathcal D$. Indeed, suppose $[a,b)\times A=\bigcup_{i=1}^\infty [a_i,b_i)\times A_i$. First, note that for every $m\in \mathbb N$ there exist pairwise disjoint $([c_j,d_j)\times B_j)_{j=1}^{N}$ in $\mathcal D$ such that
$$
[a,b)\times A= \bigcup_{i=1}^m [a_i,b_i)\times A_i \cup \bigcup_{j=1}^N [c_j,d_j)\times B_j.
$$
Therefore, since $\nu:\mathcal D\rightarrow \mathbb R_+$ is finitely additive it follows that $\nu([a,b)\times A)\geq \sum_{i=1}^m \nu([a_i,b_i)\times A_i)$, which, as $m\in \mathbb N$ is arbitrary, implies that
$$
\nu([a,b)\times A)\geq \sum_{i=1}^\infty \nu([a_i,b_i)\times A_i).
$$
For the converse inequality, let $\epsilon>0$. Using the regularity of $\mu$ we can find a compact set $K_A\subset A$ such that
\begin{equation}\label{eq:regular}
\int_{A\backslash K_A} K(b,t)d\mu(t) <\epsilon,
\end{equation}
as well as open sets $(U_n)_{n=1}^\infty\subset \Sigma_n$ such that for every $n\in \mathbb N$ we have $A_n\subset U_n$ and
\begin{equation}\label{Un-An}
\int_{U_n\backslash A_n} K(b_n,t)-K(a_n,t)d\mu(t) <\frac\epsilon{2^n},
\end{equation}
Since $K(\cdot,t)$ is continuous for $\mu$-almost every $t\in S^{n-1}$, we can find $\delta>0$ such that
\begin{equation}\label{eq:contdelta}
\int_A K(b,t)-K(b-\delta,t)d\mu(t)<\epsilon,
\end{equation}
and for each $n\in\mathbb N$ we can also take $\delta_n>0$ such that
\begin{equation}\label{Unandelta}
\int_{U_n} K(a_n,t)-K(a_n-\delta_n,t)d\mu(t)<\frac\epsilon{2^n}.
\end{equation}
Since $[a,b-\delta]\times K_A\subset \bigcup_{n=1}^\infty (a_n-\delta_n,b_n)\times U_n$, by compactness there exists a finite set $F\subset \mathbb N$ such that
$$
[a,b-\delta]\times K_A\subset \bigcup_{n\in F} (a_n-\delta_n,b_n)\times U_n.
$$
In particular, we have
$$
[a,b-\delta)\times K_A\subset \bigcup_{n\in F} [a_n-\delta_n,b_n)\times U_n,
$$
so that, using \eqref{Unandelta} and \eqref{Un-An}, it follows that
\begin{eqnarray}\label{eq:nusum}
\nu([a,b-\delta)\times K_A)&\leq& \sum_{n\in F} \nu([a_n-\delta_n,b_n)\times U_n)\\
\nonumber&=&\sum_{n\in F} \int_{U_n} K(b_n,t)-K(a_n-\delta_n,t)d\mu(t)\\
\nonumber&<&\sum_{n\in F} \int_{U_n} K(b_n,t)-K(a_n,t)d\mu(t)+\epsilon\\
\nonumber&<&\sum_{n\in F} \int_{A_n} K(b_n,t)-K(a_n,t)d\mu(t)+2\epsilon.
\end{eqnarray}

Now, using the monotonicity of $K(\cdot,t)$ and \eqref{eq:regular}, we have
\begin{equation}\label{eq:AmenosK_A}
\nu\left([a,b-\delta)\times (A\backslash K_A)\right)=\int_{A\backslash K_A} K(b-\delta,t)-K(a,t)d\mu(t)\leq \int_{A\backslash K_A} K(b,t)d\mu(t)<\epsilon.
\end{equation}

Moreover, \eqref{eq:contdelta} yields
\begin{equation}\label{eq:deltaA}
\nu([b-\delta,b)\times A)\leq \int_A K(b,t)-K(b-\delta,t)d\mu(t)<\epsilon.
\end{equation}

Therefore, putting together \eqref{eq:nusum}, \eqref{eq:AmenosK_A} and \eqref{eq:deltaA}, we get
\begin{eqnarray*}
\nu\left([a,b)\times A\right)&=&\nu\left([a,b-\delta)\times K_A\right)+\nu\left([a,b-\delta)\times (A\backslash K_A)\right)+\nu\left([b-\delta,b)\times A\right)\\
&\leq& \sum_{n\in F}\nu\left([a_n,b_n)\times A_n\right)+4\epsilon\leq \sum_{n=1}^\infty\nu\left([a_n,b_n)\times A_n\right)+4\epsilon.
\end{eqnarray*}
Since $\epsilon>0$ was arbitrary, it follows that $\nu:\mathcal D\rightarrow \mathbb R_+$ is countably additive as claimed.

Noting that $\mathbb R^n\backslash\{0\}$ is homeomorphic to $(0,\infty)\times S^{n-1}$, it is easy to check that $\mathcal D$ generates the $\sigma$-algebra of all Borel subsets of $\mathbb R^n$. Hence, a standard argument (cf. \cite[Proposition 3.2.4]{Dudley}) yields that $\nu$ can be extended to a Borel measure on $\mathbb R^n$.
\end{proof}

\end{document}